\newtheorem{theorem}{Theorem}[section]
\newtheorem{lemma}[theorem]{Lemma}
\newtheorem{proposition}[theorem]{Proposition}
\newtheorem*{nonumtheorem}{Theorem}
\newtheorem{corollary}[theorem]{Corollary}
\newtheorem{definition}[theorem]{Definition}
\newcommand{\Z}{\mathbb{Z}}
\newcommand{\R}{\mathbb{R}}
\newcommand{\vol}{\lambda}
\begin{document}

\title{Large values of the additive energy in $\R^d$ and $\Z^d$}
\author{Xuancheng Shao}
\address{Department of Mathematics \\ Stanford University \\
450 Serra Mall, Bldg. 380\\ Stanford, CA 94305-2125}
\email{xshao@math.stanford.edu}

\begin{abstract}
Combining Freiman's theorem with Balog-Szemer\'{e}di-Gowers theorem
one can show that if an additive set has large additive energy, then
a large piece of the set is contained in a generalized arithmetic
progression of small rank and size. In this paper, we prove the
above statement with the optimal bound for the rank of the
progression. The proof strategy involves studying upper bounds for
additive energy of subsets of $\R^d$ and $\Z^d$.
\end{abstract}

\maketitle

\section{Introduction}

Let $G$ be an abelian group. For a finite set $A\subset G$, define
the sumset $A+A=\{a_1+a_2:a_1,a_2\in A\}$. The doubling constant of
$A$ is defined by $\sigma(A)=|A+A|/|A|$. A central topic in additive
combinatorics is to obtain structural information on $A$ when $A$
has small doubling. In this direction, Freiman's theorem asserts
that all such sets must lie inside a {\em generalized arithmetic
progression} (or simply a {\em progression} for short) of small rank
and size.

\begin{definition}[Progressions]
Let $r$ and $N_1,\cdots,N_r$ be positive integers. A progression of
rank $r$ is a set of the form
\begin{equation}\label{eq:GAP}
Q=\{x_0+x_1n_1+\cdots+x_rn_r:n_i\in\Z,0\leq n_i<N_i\}
\end{equation}
for some $x_0,x_1,\cdots,x_r\in G$. The progression $Q$ is said to
be proper if $|Q|=N_1N_2\cdots N_r$.
\end{definition}

\begin{theorem}[Freiman]
Let $A$ be a finite subset of a torsion-free abelian group $G$ with
$\sigma(A)=K$. Then there is a proper progression $Q$ containing $A$
with rank at most $C_1(K)$ and size at most $C_2(K)|A|$, where
$C_1(K)$ and $C_2(K)$ are constants depending only on $K$.
\end{theorem}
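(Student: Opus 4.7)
My plan is to follow Ruzsa's strategy, which reduces Freiman's theorem to locating a proper progression inside a Bohr set of controlled rank in a cyclic group. The argument proceeds through three main stages: bounding iterated sumsets of $A$, modeling $A$ inside $\Z/N\Z$, and extracting a progression from a Bohr set found therein.

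First, I would invoke the Pl\"unnecke-Ruzsa inequalities to show that $|nA-mA| \leq K^{O(n+m)}|A|$ for all $n,m \geq 1$, and combine this with Ruzsa's modeling lemma (applied to $8A-8A$) to produce a subset $A' \subset A$ with $|A'| \geq |A|/2$ that is Freiman $8$-isomorphic to a subset $A'' \subset \Z/N\Z$, where $N \leq C(K)|A|$. The torsion-free hypothesis on $G$ is essential here, since it lets us embed $A$ into $\Z^d$ before reducing modulo a single integer $N$.

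Second, inside $\Z/N\Z$, I would apply Bogolyubov's lemma to conclude that $2A''-2A''$ contains a Bohr set $B(S,\rho)$ with $|S| \leq C(K)$ and $\rho$ an absolute constant. The central geometric step is then to locate a proper progression inside $B(S,\rho)$: identifying $B(S,\rho)$ with the integer points of a symmetric box in $\R^{|S|}$ modulo an auxiliary sublattice, an application of Minkowski's second theorem on successive minima yields a proper progression $P \subset B(S,\rho)$ of rank $|S|$ with $|P| \gg_K N \gg_K |A|$. Pulling back via the Freiman $8$-isomorphism converts $P$ into a progression $P' \subset 2A'-2A'$ of controlled rank and size.

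Finally, I would close the argument with Ruzsa's covering lemma: the inequality $|A+(P'-P')| \leq K^{O(1)}|A|$ combined with $|P'| \gg_K |A|$ forces $A$ to be covered by $O_K(1)$ translates of $P'-P'$, and these translates can be absorbed into a single progression $Q$ whose rank and size remain functions of $K$ alone. The main obstacle lies in the second stage, namely the passage from Bohr set to \emph{proper} progression: the naive application of Minkowski's theorem loses a factor polynomial in $|S|$ in the rank, and obtaining an \emph{optimal} rank bound -- the stated contribution of the present paper -- will require a finer analysis of the interplay between additive energy and the geometry of progressions in $\R^d$ and $\Z^d$.
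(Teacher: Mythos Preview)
The paper does not prove Freiman's theorem at all: it is stated in the introduction as background, with references to Ruzsa and Chang for effective versions, and is later invoked as a black box in Section~5. So there is no ``paper's own proof'' to compare against. Your outline is a reasonable sketch of Ruzsa's classical argument (Pl\"unnecke--Ruzsa, modeling lemma, Bogolyubov, geometry of numbers, covering lemma), and would indeed establish the qualitative statement $C_1(K),C_2(K)<\infty$.

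However, your final paragraph contains a substantive misreading of the paper. You write that ``obtaining an \emph{optimal} rank bound --- the stated contribution of the present paper --- will require a finer analysis of the interplay between additive energy and the geometry of progressions.'' This conflates two different results. The optimal rank bound for Freiman's theorem itself, $C_1(K)\le\lfloor K-1+\epsilon\rfloor$, is due to Chang and is merely cited here. The paper's actual contribution is Theorem~\ref{thm:bsgv}, a statement about sets with large \emph{additive energy} rather than small doubling: under the hypothesis $E_k(A_1,\dots,A_k)\ge E_k(B_1,\dots,B_k)+\epsilon|X|^{k-1}$, a positive-density piece of $X$ lies in a progression of rank at most $d$. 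The energy bounds in $\R^d$ and $\Z^d$ (Propositions~\ref{prop:Rd} and~\ref{prop:Zd}) are used to prove \emph{that} theorem, not to sharpen the rank in Freiman's theorem. Your proposal therefore answers a question the paper does not ask, and the ``main obstacle'' you identify is not the obstacle the paper is addressing.
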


Chang \cite{chang}, building on earlier ideas of Ruzsa \cite{ruzsa},
obtained an effective version of Freiman's theorem with
$C_1(K)\leq\lfloor K-1+\epsilon\rfloor$ for any $\epsilon>0$ and
$|A|$ sufficiently large depending on $K$ and $\epsilon$, and
$C_2(K)\leq\exp(CK^2\log^3K)$ for some absolute constant $C$. We
will not worry too much about the quantity $C_2(K)$ here, but
instead focus on the quantity $C_1(K)$, the bound for the rank of
the progression $Q$. It turns out that Chang's bound on $C_1(K)$ is
optimal. This can be seen by taking
\begin{equation}\label{eq:lac}
A=\bigcup_{i=1}^{K-1}\{x_i+1,\cdots,x_i+N\}
\end{equation}
for a very lacunary sequence of integers $\{x_i\}$. This set $A$ has
doubling roughly $\sigma(A)\approx K$ for $N$ large enough, and can
only be covered by a progression of rank at least $K-1$.

On the other hand, observe that a typical progression of rank $K-1$
of the form \eqref{eq:GAP} should have doubling roughly $2^{K-1}$
rather than $K$, provided that $N_1,\cdots,N_{K-1}$ are large
enough. For the set $A$ given in \eqref{eq:lac}, instead of covering
it by a single progression of rank $K-1$, a more efficient way of
covering $A$ is to use $K-1$ copies of arithmetic progressions, or
progressions of rank $1$. A natural question arises as to whether
the bound for the rank of $Q$ can be improved if a few translates of
$Q$ are allowed. This question is answered by Freiman-Bilu
\cite{freiman-bilu}, and the following version of the statement is
due to Green-Tao \cite{green-tao}.

\begin{theorem}[Freiman-Bilu]
Let $A$ be a finite subset of a torsion-free abelian group $G$ with
$\sigma(A)=K$. Then for any $\epsilon>0$ there is a proper
progression $Q$ of rank at most $\lfloor\log_2K+\epsilon\rfloor$ and
size at most $|A|$, such that $A$ can be covered by $C(K,\epsilon)$
translates of $Q$ for some constant $C(K,\epsilon)$ depending on $K$
and $\epsilon$.
\end{theorem}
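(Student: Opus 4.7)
The plan is to follow Bilu's strategy, combining Freiman's theorem with Minkowski's geometry of numbers. First, apply Freiman's theorem to obtain a proper progression $P$ with $A\subseteq P$, of rank $r\leq C_1(K)$ and size $|P|\leq C_2(K)|A|$. Since $P$ is proper, it is the injective image of a box $B=\prod_{i=1}^{r}[0,N_i)\subseteq\Z^r$ under a homomorphism $\phi:\Z^r\to G$. Pull back $A$ via $\phi$ to a set $\widetilde{A}\subseteq B$ with $|\widetilde{A}|=|A|$; after possibly enlarging $B$ so that $\phi$ remains injective on $\widetilde{A}+\widetilde{A}$, the doubling of $\widetilde{A}$ in $\Z^r$ is still bounded by $K$. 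This reduces the problem to the Euclidean setting.

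Second, analyze the convex geometry of $\widetilde{A}$. Let $C$ be the symmetric convex hull of $\widetilde{A}-\widetilde{A}$ in $\R^r$, and let $\lambda_1\leq\cdots\leq\lambda_r$ be the successive minima of $C$ with respect to $\Z^r$. By Minkowski's second theorem, $\prod_{i=1}^{r}\lambda_i\cdot\mathrm{vol}(C)$ is comparable to $2^r$. Choosing a basis adapted to the successive minima, partition the coordinate directions into \emph{long} directions (small $\lambda_i$, where $\widetilde{A}$ is spread out) and \emph{short} directions (large $\lambda_i$, where $\widetilde{A}$ takes only a bounded number of values). The short directions together contribute only $O_{K,\epsilon}(1)$ translates to the eventual covering.

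Third, bound the number of long directions by $\log_2 K+\epsilon$. The crucial observation is that each genuinely long direction contributes a factor of at least $2$ (up to $o(1)$ loss) to the doubling, essentially because the projection of $\widetilde{A}$ along that direction is a long arithmetic progression whose doubling is close to $2$. Since the long directions are asymptotically independent after the basis change, their doublings multiply, forcing their number to be at most $\log_2 K+\epsilon$. The box supported on the long generators, pushed forward via $\phi$, yields a progression $Q$ of rank $\leq\lfloor\log_2 K+\epsilon\rfloor$ whose translates (indexed by short-direction contributions) cover $A$. A final rescaling of generators ensures $|Q|\leq|A|$ without increasing the rank.

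The principal obstacle is extracting the sharp coefficient $1$ in $\log_2 K+\epsilon$. A crude geometry-of-numbers analysis gives only $C\log_2 K$; reaching the optimal constant demands a careful pigeonhole/limiting argument that isolates genuinely doubling directions from borderline ones (this is where the $\epsilon$ slack is absorbed), combined with a delicate use of successive minima to maintain the properness condition on the final progression $Q$. Reconciling the sharp rank bound with properness is the most subtle part of the argument.
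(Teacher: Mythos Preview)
The paper does not prove this theorem. The Freiman--Bilu theorem is stated in the introduction as background, attributed to Freiman--Bilu with the quoted formulation due to Green--Tao, and no proof is given; the paper's own results concern additive energy, not doubling, and the Freiman--Bilu statement is invoked only for context. So there is no ``paper's own proof'' against which to compare your proposal.

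That said, your sketch does follow the standard Bilu strategy as in the cited references: embed $A$ into $\Z^r$ via Freiman's theorem, use geometry of numbers (successive minima of an appropriate convex body) to separate long and short directions, and argue that each long direction forces a factor close to $2$ in the doubling. A few points are understated. The step ``after possibly enlarging $B$ so that $\phi$ remains injective on $\widetilde{A}+\widetilde{A}$'' needs Freiman $2$-isomorphism control, which does not come for free from the bare statement of Freiman's theorem; one typically invokes a result guaranteeing a $k$-proper progression (as the paper itself does later, citing Green). More importantly, the assertion that long directions contribute multiplicatively is exactly the nontrivial core of Bilu's lemma, and your last paragraph correctly flags that obtaining the sharp constant $1$ in front of $\log_2 K$ is the delicate point---but the sketch does not actually supply that argument, so as written this is an outline rather than a proof.
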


In fact, one can take $C(K,\epsilon)$ above to be
$\exp(CK^3\log^3K)/\epsilon^{CK}$ for some absolute constant $C$.
Although not the main concern of the current paper, it is a
difficult problem to obtain polynomial dependence on $K$ for this
quantity. This is related to the Polynomial Freiman-Ruzsa Conjecture
(PFR); see \cite{green-pfr} for the precise statement of this
conjecture (in the finite field setting).

Having explained the basic structure theorems for finite sets $A$ with small doubling, we now turn to another measurement of additive structure. The {\em additive energy} of $A$, denoted by $e(A)$, is defined by
\[ e(A)=\frac{\#\{(a_1,a_2,a_3,a_4)\in A^4:a_1+a_2=a_3+a_4\}}{|A|^3}. \]
Basically $e(A)$ counts the number of {\em additive quadruples} $(a_1,a_2,a_3,a_4)$ with $a_1+a_2=a_3+a_4$, and is normalized so that $e(A)\in [0,1]$. We will be concerned with sets having large additive energy.

A simple application of Cauchy-Schwarz inequality gives the estimate $e(A)\geq 1/\sigma(A)$. Hence small doubling implies large additive energy. In general, the converse of this statement is false: if one adds $o(|A|)$ random elements to $A$, then the additive energy $e(A)$ changes only by $o(1)$, while the doubling $\sigma(A)$ could change dramatically. However, the Balog-Szemer\'{e}di-Gowers theorem says that large additive energy implies small doubling for a large piece of $A$. This theorem has become an important tool in additive combinatorics; see Section 6.4 of \cite{tao-vu} for a proof and references to the original papers.

\begin{theorem}[Balog-Szemer\'{e}di-Gowers]
Let $A$ be a finite subset of an abelian group $G$ with $e(A)\geq 1/K$. Then there is a subset $A'\subset A$ such that $|A'|\geq C^{-1}K^{-C}|A|$ and $\sigma(A')\leq CK^C$ for some absolute constant $C>0$.
\end{theorem}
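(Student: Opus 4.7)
The plan is to convert the additive-energy hypothesis into a graph-density statement about the ``popular sums'' of $A$, apply a combinatorial lemma to extract a large substructure in which every pair of endpoints is joined by many length-three walks, and finally translate walk counts into representation counts in $S-S+S$ to control the sumset via Pl\"unnecke-Ruzsa.

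\textbf{Step 1: popular sums.} For $s$ in the ambient group, write $r(s) := \#\{(a_1,a_2) \in A \times A : a_1+a_2 = s\}$, so that $\sum_s r(s)^2 = e(A)|A|^3 \geq |A|^3/K$. Truncating at threshold $|A|/(2K)$, the set of popular sums $S := \{s : r(s) \geq |A|/(2K)\}$ satisfies $|S| \leq 2K|A|$, and the bipartite graph $H \subseteq A \times A$ with edge set $\{(a,b) : a+b \in S\}$ carries at least $|A|^2/(2K)$ edges.

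\textbf{Step 2: the Balog-Szemer\'edi graph lemma.} Next I would invoke a purely graph-theoretic statement: any bipartite graph of edge density $\delta$ between two $n$-vertex sets contains subsets $A_1, A_2$ of size at least $\delta^{O(1)} n$ such that every pair $(a, a') \in A_1 \times A_2$ is joined by at least $\delta^{O(1)} n^2$ length-three walks. The proof proceeds by an averaging step (a typical vertex supports many length-two paths) followed by an iterative pruning of low-degree vertices, with only polynomial loss in $\delta$. Applied to $H$ with $\delta = 1/(2K)$, this yields $A_1, A_2 \subseteq A$ of size $\geq K^{-O(1)}|A|$ together with a walk count $t \geq K^{-O(1)}|A|^2$ between every such pair.

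\textbf{Step 3: representations and sumset bound.} A length-three walk $a \to b \to c \to a'$ in $H$ gives a triple $(s_1, s_2, s_3) := (a+b,\, c+b,\, c+a') \in S^3$, and the identity $s_1 - s_2 + s_3 = a + a'$ shows that each walk produces a representation of $a + a'$ as an element of $S - S + S$. For fixed $(a, a')$ the map walk $\mapsto$ triple is injective, since $b = s_1 - a$ and $c = s_3 - a'$ are recovered from the triple. Consequently every $x \in A_1 + A_2$ admits at least $t$ representations $s_1 - s_2 + s_3 = x$ with $s_i \in S$, so $t \cdot |A_1 + A_2| \leq |S|^3$ and hence $|A_1 + A_2| \leq K^{O(1)}|A|$. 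Since $|A_1|, |A_2| \geq K^{-O(1)}|A|$, the Pl\"unnecke-Ruzsa inequality together with the Ruzsa triangle inequality delivers a single subset $A' \subseteq A$ with $|A'| \geq K^{-O(1)}|A|$ and $\sigma(A') \leq K^{O(1)}$, as required.

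\textbf{Main obstacle.} The crux of the argument is the Balog-Szemer\'edi graph lemma: upgrading an average-density hypothesis into a uniform ``many walks between every pair'' conclusion while losing only polynomially in $\delta$. Once this combinatorial lemma is in place, the translation to representations in $S-S+S$ is a straightforward counting exercise, as is the final Pl\"unnecke-Ruzsa step.
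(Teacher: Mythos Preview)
The paper does not actually prove this statement; it is cited as a standard tool with the remark ``see Section 6.4 of \cite{tao-vu} for a proof and references to the original papers,'' so there is no in-paper argument to compare against. Your outline is the standard Gowers-style proof and is correct in its broad strokes: the popular-sums truncation in Step~1 is fine, the graph lemma in Step~2 is exactly the combinatorial core, and the walk-to-representation translation in Step~3 gives $|A_1+A_2|\le K^{O(1)}|A|$ as claimed. The only place you are a bit breezy is the final sentence, passing from control on $|A_1+A_2|$ to a single $A'\subseteq A$ with small doubling; this is routine (e.g.\ Ruzsa's triangle inequality gives $|A_1-A_1|\le |A_1+A_2|^2/|A_2|\le K^{O(1)}|A|$, and Pl\"unnecke--Ruzsa then bounds $|A_1+A_1|$), but it is worth saying which inequality you are invoking rather than waving at both.
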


We now turn to the statement of our main result, which is a hybrid
of the Freiman-Bilu theorem and the Balog-Szemer\'{e}di-Gowers
theorem. Roughly speaking, our result asserts that, if $e(A)\geq
1/K$, then a large piece of $A$ is contained in a proper progression
$Q$ of small rank and size. This qualititative assertion is simply a
consequence of Freiman's theorem and the Balog-Szemer\'{e}di-Gowers
theorem. The main innovation of our result is an optimal dependence
of the rank of $Q$ on the constant $K$.

Before stating it, let us first discuss what the optimal dependence
of the rank of $Q$ on $K$ should be. If $I$ is an arithmetic
progression, then $e(I)=2/3+o(1)$ as $|I|\rightarrow\infty$. If $Q$
is a proper progression of the form \eqref{eq:GAP}, and if $Q+Q$ is
proper, a moment's thought confirms that $e(Q)$ is roughly $(2/3)^r$
when $N_1,\cdots,N_r$ are large. Note that a progression of rank $r$
is the image of a box under a homomorphism $\Z^r\rightarrow G$. It
turns out that if one uses a ball instead of a box, one can create
sets with even larger additive energy.

Consider the set $A\subset\Z^r$ consisting of lattice points inside
the ball of radius $R$ centered at the origin:
\begin{equation}\label{ball}
A=\{(x_1,\cdots,x_r):x_1^2+\cdots+x_r^2\leq R^2\}.
\end{equation}
 As
$R\rightarrow\infty$, the additive energy $e(A)$ approaches the
additive energy of the closed unit ball in $\R^r$, which we denote
by $e_r$ (although we have not defined the additive energy for
compact subsets of $\R^r$, the reader should not have trouble
figuring out what the definition should be; see Section 2 for the
precise definition). One can compute, for example, that $e_1=2/3$
and $e_2=1-16/3\pi^2\approx 0.46$. Note that $e_2>4/9$, showing that
this set $A\subset\Z^2$ indeed has larger additive energy than a
progression of rank $2$. A lengthy but standard computation
involving incomplete Bessel functions shows that
$e_r=(4\sqrt{3}/9+o(1))^r\approx 0.77^r$ for large $r$. The constant
$4\sqrt{3}/9$ here shows up as well in the sharp Hausdorff-Young
inequality; see Section \ref{sec:hy}.

\begin{nonumtheorem}[Theorem \ref{thm:bsgv}, simplified version]
Let $A$ be a finite subset of a torsion-free abelian group $G$ with
$e(A)\geq e_{r+1}+\epsilon$ for some $\epsilon>0$ and positive
integer $r$. Then there is a subset $A'\subset A$ with $|A'|\geq
c(r,\epsilon)|A|$ and a proper progression $Q$ containing $A'$ of
rank at most $r$ and size at most $C(r,\epsilon)|A'|$, where
$c(r,\epsilon)$ and $C(r,\epsilon)$ are constants depending only on
$r$ and $\epsilon$.
\end{nonumtheorem}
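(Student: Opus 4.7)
The plan is to combine Balog-Szemer\'{e}di-Gowers and Freiman's theorem with a continuum argument that exploits the specific constant $e_{r+1}$. Standard BSG followed by Chang's Freiman already yields \emph{some} proper progression $P \supset A_1$ with $|A_1| \geq c_1(r,\epsilon)|A|$, $\sigma(A_1) \leq C_1(r,\epsilon)$, and $|P| \leq C_2(r,\epsilon)|A_1|$, but the rank $s$ of $P$ is only bounded by some function of $1/e(A) \leq 1/(e_{r+1}+\epsilon)$ and may well exceed $r$. The core of the proof will be to show that any rank $s > r$ forces $e(A_1) \leq e_{r+1}$, contradicting the hypothesis. For this comparison to be useful one needs $e(A_1)$ itself (not just $e(A)$) to exceed $e_{r+1}$, so I would use a refined, energy-preserving version of BSG (or a dyadic pigeonholing step) to arrange $e(A_1) \geq e_{r+1} + \epsilon/2$.

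To bound $s$, suppose for contradiction that $s \geq r+1$. Since $P$ is proper, the natural coordinate map gives a Freiman $2$-isomorphism between a box $B \subset \Z^s$ and $P$, so additive energy is preserved when $A_1$ is transported to $A_1' \subset B$. Rescaling $B$ to fit inside $[0,1]^s$ identifies $A_1'$ with a union $\tilde{A}$ of small cubes of total volume $|\tilde{A}| \geq 1/C_2(r,\epsilon)$. Since the conclusion is trivial when $|A|$ is bounded in terms of $r$ and $\epsilon$, one may assume $|A|$ is large; a Riemann-sum approximation then gives that the discrete additive energy of $A_1'$ converges to the continuous energy $e(\tilde{A}) := \|1_{\tilde{A}} * 1_{-\tilde{A}}\|_2^2 / |\tilde{A}|^3$, so that $e(\tilde{A}) \geq e_{r+1} + \epsilon/4$.

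The final and most delicate ingredient is an isoperimetric-type bound for continuous additive energy: for every measurable $S \subset \R^s$ of positive finite measure, $e(S) \leq e_s$, with equality attained by balls. Combined with the (expected) monotonicity $e_s \leq e_{r+1}$ for $s \geq r+1$, this contradicts the lower bound on $e(\tilde{A})$ and so forces $s \leq r$, completing the proof. I expect the bound $e(S) \leq e_s$ to follow from the Brascamp-Lieb-Luttinger rearrangement inequality applied to the representation $e(S)|S|^3 = \int 1_S(x) 1_S(y) 1_S(z) 1_S(x+y-z)\,dx\,dy\,dz$, which is exactly the multilinear form that BLL handles, so that symmetric decreasing rearrangement of $1_S$ replaces $S$ by a ball. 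Verifying this rearrangement step cleanly, together with the monotonicity of $e_s$ in $s$ (plausibly via a Fubini/projection argument on the unit ball in $\R^{s+1}$), will be the main obstacle; the BSG/Freiman reduction and the discrete-to-continuous passage are comparatively routine once the right energy-preserving quantitative versions are in hand.
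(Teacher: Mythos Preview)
Your outline matches the paper's strategy: pass to a piece with small doubling, embed it densely in a box in $\Z^s$, and compare to balls in $\R^s$ via a rearrangement inequality (you cite Brascamp--Lieb--Luttinger; the paper uses Steiner symmetrization, which is essentially the same mechanism). The two places you label ``comparatively routine'' are, however, exactly where the paper does real work. The ``energy-preserving BSG'' is not a known refinement of BSG nor a dyadic pigeonhole: what is actually required is the Green--Sisask structure theorem (Proposition~\ref{prop:structure} here), which decomposes $A$ into boundedly many pieces $X_j$ of small doubling plus noise, with negligible cross-energy between distinct pieces. Only once that is in hand does the trivial bound $\sum_j|X_j|^3\le|A|^3$ combined with $\sum_j e(X_j)|X_j|^3\ge(e(A)-\delta)|A|^3$ pigeonhole to a piece with $e(X_j)\ge e_{r+1}+\epsilon/2$. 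So your claim is true, but the structural decomposition, not a tweak of BSG, is what supplies it.

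Your discrete-to-continuous step also hides a difficulty: Freiman's theorem gives a box in $\Z^s$ with $s$ possibly much larger than $r+1$ and with no guarantee that all side lengths $N_i$ are large, so the Riemann-sum approximation can fail, and you are then left needing the monotonicity $e_s\le e_{r+1}$, which does not fall out of BLL. The paper resolves both issues at once by first collapsing the rank-$s$ box to a rank-$(r+1)$ box via a Freiman $k$-isomorphism (a lossless step); then either all $r+1$ sides are large and a careful discrete version of the rearrangement inequality (Proposition~\ref{prop:Zd}, with an explicit $O(\lambda(\partial P))$ error) yields the contradiction, or some side is $O_\epsilon(1)$ and the box is already covered by $O_\epsilon(1)$ rank-$r$ progressions. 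Inserting this collapse into your argument removes the need for the monotonicity of $e_s$ and makes the Riemann-sum heuristic rigorous.
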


The dependencies of $c(r,\epsilon)$ and $C(r,\epsilon)$ on $r$ and
$\epsilon$ can be made explicit, although we did not care to do so
(one should get exponential dependence on $\epsilon$). The lower
bound for $e(A)$ in the hypothesis is sharp, which can be seen by
considering sets of the type \eqref{ball}.

In the next section, we will state the main result in a more general
form, describe the main ingredients in the proof of it, and give an
application of it to the carries problem.

\smallskip

{\bf Acknowledgments.} The author is grateful to Kannan
Soundararajan for many valuable discussions, and to Terry Tao for
pointing to him the references \cite{christ1,christ2}.

%%%%%%%%%%%%%%%%%%%%%%%%%%%%%%%%%%%%%%%%%%%%%%%%%%%%%%%%%%%%%%%%%%%
%%%%%%%%%%%%%%%%%%%%%%%%%%% Statements %%%%%%%%%%%%%%%%%%%%%%%%%%%%
%%%%%%%%%%%%%%%%%%%%%%%%%%%%%%%%%%%%%%%%%%%%%%%%%%%%%%%%%%%%%%%%%%%

\section{Statement of results}

Let $A$ be a finite subset of an abelian group $G$. Recall that the
additive energy $e(A)$ counts the (normalized) number of solutions
to the linear equation $a_1+a_2-a_3-a_4=0$ with $a_i\in A$. We
consider a more general situation. For any positive integer $k\geq
3$ and finite subsets $A_1,\cdots,A_k\subset G$, define
\[ E_k(A_1,\cdots,A_k)=\#\{(a_1,\cdots,a_k)\in A_1\times\cdots\times A_k:a_1+\cdots+a_k=0\}. \]
In particular, we have $e(A)=E_4(A,A,-A,-A)/|A|^3$. When $A_1=\cdots=A_k=A$ we will simply write $E_k(A)$ for $E_k(A_1,\cdots,A_k)$.

In the qualitative aspect, studying $E_k(A)$ is not too much
different from studying the additive energy $e(A)$, due to the fact
that if $E_k(A)$ is large, then $e(A)$ is also large, and vice versa
(provided that $A=-A$). However, since we will be interested in the
quantitative aspect, our main result will be stated for general $k$.

We will also need to study additive energy for compact subsets of $\R^d$. In this setting, for compact subsets $A_1,\cdots,A_k\subset\R^d$, define
\[ E_k(A_1,\cdots,A_k)=\vol(\{(a_1,\cdots,a_{k-1})\in A_1\times\cdots\times A_{k-1}:-a_1-\cdots-a_{k-1}\in A_k\}),\]
where $\vol$ denotes the usual Lebesgue measure in $\R^{d(k-1)}$. As
before, when $A_1=\cdots=A_k=A$ we will simply write $E_k(A)$ for
$E_k(A_1,\cdots,A_k)$.

In the sequel we shall always use $\vol$ to denote Lebesgue measure in the appropriate dimension. We also remark that the notation $E_k$ is used in two different ways, both for finite sets in abelian groups and for compact sets in Euclidean spaces.

\begin{theorem}\label{thm:bsgv}
Let $d$ be a positive integer and $A_1,\cdots,A_k$ ($k\geq 3$) be
finite subsets of a torsion-free abelian group $G$. Let $X$ be the
union $A_1\cup\cdots\cup A_k$, and $B_i\subset\R^{d+1}$ ($1\leq
i\leq k$) be the closed ball centered at the origin with
$\vol(B_i)=|A_i|$. Suppose that
\[ E_k(A_1,\cdots,A_k)\geq E_k(B_1,\cdots,B_k)+\epsilon |X|^{k-1} \]
for some $\epsilon>0$. Then there is a proper progression $Q$ of
rank at most $d$ such that $|Q|\ll_{\epsilon}|X|$ and $|Q\cap
X|\gg_{\epsilon}|X|$.
\end{theorem}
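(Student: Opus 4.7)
The plan is to argue by contradiction and reduce to a sharp dimension-monotonicity inequality for additive energies of balls in $\R^r$. First, the hypothesis $E_k(A_1,\ldots,A_k)\geq E_k(B_1,\ldots,B_k)+\epsilon|X|^{k-1}$ forces $X$ to have large (normalized) additive energy, so a combination of the Balog--Szemer\'{e}di--Gowers theorem and Freiman's theorem yields a proper progression $Q_0$ of some rank $r$ with $|Q_0|\ll_{\epsilon}|X|$ and $|Q_0\cap X|\gg_{\epsilon}|X|$. The goal is to show that $r\leq d$; accordingly I suppose for contradiction that $r\geq d+1$.

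Next, I pass to a continuous formulation in $\R^r$. Since a proper progression is Freiman $k$-isomorphic to a box in $\Z^r$, each $A_i\cap Q_0$ transfers to a subset $\tilde A_i$ of that box, and this identification preserves $E_k$ exactly. Rescaling the box to $[0,1]^r$ and taking a weak-$*$ subsequential limit of the normalized indicators, one obtains compact sets $\mathcal{A}_i\subset[0,1]^r$ whose volumes equal the densities of $A_i$ in $Q_0$. A discrete-to-continuous calculation then shows that the finite energy $E_k(A_1,\ldots,A_k)/|X|^{k-1}$, up to a factor governed by $|Q_0|/|X|$, is approximated by the continuous energy $E_k(\mathcal{A}_1,\ldots,\mathcal{A}_k)$; crucially, the $\epsilon$-gap in the hypothesis carries over into an analogous gap at the continuous level relative to the ball baseline in $\R^{d+1}$.

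The contradiction then comes from two continuous inequalities. First, the Brascamp--Lieb--Luttinger (Riesz) rearrangement inequality applied to the indicators $\mathbf{1}_{\mathcal{A}_i}$ in $\R^r$ gives $E_k(\mathcal{A}_1,\ldots,\mathcal{A}_k)\leq E_k(\tilde B_1,\ldots,\tilde B_k)$, where $\tilde B_i\subset\R^r$ is the centered ball of the same volume as $\mathcal{A}_i$. Second, the additive energy of balls of fixed volumes is monotonically decreasing in the ambient dimension, so since $r\geq d+1$,
\[
E_k(\tilde B_1,\ldots,\tilde B_k)\leq E_k(B_1,\ldots,B_k).
\]
Composing these two bounds contradicts the $\epsilon$-gap carried over from the hypothesis, forcing $r\leq d$.

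The principal obstacle I anticipate is establishing this dimension-monotonicity for balls with both the required generality (mixed volumes $|A_i|$, not just the equal-volume diagonal case) and a quantitative form robust enough to interface with the finite setting. The asymptotic $e_r=(4\sqrt{3}/9+o(1))^r$ indicates the right qualitative picture, but the sharp inequality presumably rests on a Fourier-analytic computation with Bessel functions via $\widehat{1_{B_R}}(\xi)\propto|\xi|^{-r/2}J_{r/2}(R|\xi|)$, possibly of the flavour appearing in Christ's work on sharp Young-type inequalities. A secondary technical difficulty is bookkeeping in the Freiman-to-continuum limit --- in particular, ensuring that $X\setminus Q_0$ contributes negligibly to the finite $E_k$, that the weak limit of indicators does not leak energy across the transition, and that the Freiman $k$-isomorphism is valid on a sufficiently long sumset so that $E_k$ is preserved outright.
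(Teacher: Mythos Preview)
Your proposal has two substantive gaps, one of which you downgrade to ``secondary'' but is in fact the heart of the matter.

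First, the localization step does not go through as written. Balog--Szemer\'{e}di--Gowers plus Freiman yields a single progression $Q_0$ containing a $\gg_\epsilon$ proportion of $X$, but this gives no control on the contribution of $X\setminus Q_0$ (or of cross-terms between $Q_0$ and its complement) to $E_k(A_1,\ldots,A_k)$. Your continuous sets $\mathcal{A}_i$ encode only $A_i\cap Q_0$, so the $\epsilon$-gap from the hypothesis has no reason to survive the passage to $E_k(\mathcal{A}_1,\ldots,\mathcal{A}_k)$. The paper resolves this with a full structure decomposition (Proposition~\ref{prop:structure}, after Green--Sisask): $X$ is partitioned into pieces $X_1,\ldots,X_m$ each of small doubling, plus a noise piece $X_0$, with quantitative bounds on the cross-energies $E_4(X_i,X_j,-X_i,-X_j)$ and on the noise energy. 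A Cauchy--Schwarz estimate (Lemma~\ref{lem:small}) then shows that all off-diagonal and noise contributions to $E_k$ are $\leq \tfrac{2}{3}\epsilon|X|^{k-1}$, so the gap localizes to a single $X_j$, to which the small-doubling analysis is applied.

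Second, your ``principal obstacle'' --- dimension monotonicity of $E_k$ for balls of fixed volumes --- is not needed at all, and the paper bypasses it. In the small-doubling case Freiman places $X$ in a $k$-proper rank-$r$ progression; rather than working in $\R^r$, the paper uses a Freiman $k$-isomorphism $\tau$ to collapse the last $r-d$ coordinates into one, landing in a box $P\subset\Z^{d+1}$. The comparison with balls is then carried out entirely in dimension $d+1$ via a discrete analogue of the rearrangement inequality (Proposition~\ref{prop:Zd}), proved by compressions and carrying an explicit boundary error $O(|A_2|\cdots|A_{k-1}|\,\vol(\partial P))$. The hypothesis then forces $\vol(\partial P)\gg_\epsilon|P|$, so some side of $P$ is $O_\epsilon(1)$, and one passes to rank $\leq d$ by covering $P$ with $O_\epsilon(1)$ rank-$d$ progressions. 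Incidentally, your weak-$*$ limit step is also faulty as stated: limits of normalized indicators are functions $0\leq f_i\leq 1$, not indicators of compact sets, so one cannot simply obtain ``compact sets $\mathcal{A}_i$''; the paper's discrete compression argument sidesteps any limiting procedure by keeping explicit error terms throughout.
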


Here and in the sequel, the implied constants in the $\ll$ or $\gg$
symbols are allowed to depend on the parameters $k,d,r$.
Dependencies on other parameters such as $\epsilon$ will be
explicitly mentioned.

\subsection{Outline of proof of Theorem \ref{thm:bsgv}}

To prove Theorem \ref{thm:bsgv}, it is necessary to understand how large $E_k(A_1,\cdots,A_k)$ can be, provided that the sets $A_i$ do not have large low-dimensional pieces. The following proposition is a result along such lines.

\begin{proposition}[Large additive energy in $\Z^d$]\label{prop:Zd}
Let $N_1,\cdots,N_d$ be non-negative integers and
$P=([-N_1,N_1]\times\cdots [-N_d,N_d])\cap\Z^d$ be a box. Let $k\geq
3$ and $A_1,\cdots,A_k\subset P$. Assume that $|A_1|\leq\cdots\leq
|A_k|$. Let $B_1,\cdots,B_k\subset\R^d$ be closed balls centered at
the origin with $\vol(B_i)=|A_i|$. Then
\[ E_k(A_1,\cdots,A_k)\leq E_k(B_1,\cdots,B_k)+O(|A_2|\cdots |A_{k-1}|\vol(\partial P)), \]
where $\vol(\partial P)=|P|\sum_{i=1}^d(2N_i+1)^{-1}$.
\end{proposition}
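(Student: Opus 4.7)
The strategy is to transfer the problem from $\Z^d$ to $\R^d$ via a thickening construction, apply the continuous Brascamp--Lieb--Luttinger (BLL) rearrangement inequality, and handle the discrete-to-continuous discrepancy via a boundary correction.

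Set $\tilde A_i := A_i + [-\tfrac12, \tfrac12)^d \subset \R^d$. Since unit cubes centered at distinct lattice points are disjoint, $\vol(\tilde A_i) = |A_i|$, and the symmetric decreasing rearrangement of $\tilde A_i$ equals the ball $B_i$ exactly. The BLL rearrangement inequality in $\R^d$ then gives
\[
E_k^{\mathrm{cts}}(\tilde A_1, \ldots, \tilde A_k) \leq E_k^{\mathrm{cts}}(B_1, \ldots, B_k) = E_k(B_1, \ldots, B_k).
\]
A direct change of variables $x_i = a_i + y_i$ with $a_i \in \Z^d$, $y_i \in [-\tfrac12, \tfrac12)^d$ yields
\[
E_k^{\mathrm{cts}}(\tilde A_1, \ldots, \tilde A_k) = \sum_{(a_i) \in \prod A_i} \phi^{*k}\bigl(\textstyle\sum_i a_i\bigr),
\]
where $\phi = \mathbf{1}_{[-1/2,1/2)^d}$ and $\phi^{*k}$ is its $k$-fold convolution (a non-negative bump supported on $[-k/2, k/2]^d$, with $\sum_{s \in \Z^d} \phi^{*k}(s) = 1$). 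Since $E_k(A_i) = N_0 := \#\{(a_i) : \sum a_i = 0\}$, the discrepancy is
\[
E_k(A_i) - E_k^{\mathrm{cts}}(\tilde A_i) = \sum_{s \in \Z^d \setminus \{0\}} \phi^{*k}(s)\bigl(N_0 - N_s\bigr),
\]
where $N_s = \#\{(a_i) : \sum a_i = s\}$. Telescoping by shifting the smallest set, $N_s(A_1, \ldots, A_k) = N_0(A_1 - s, A_2, \ldots, A_k)$, so $|N_0 - N_s|$ is bounded by a count of tuples $(a_2, \ldots, a_k)$ whose negative sum lies in $A_1 \triangle (A_1 - s)$; using $|A_1| \leq \cdots \leq |A_k|$, the per-value representation count is $\leq |A_2| \cdots |A_{k-1}|$.

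\textbf{Main obstacle.} The crux is to bound the weighted boundary count
\[
\sum_{s \neq 0} \phi^{*k}(s) \cdot \#\bigl\{(a_2, \ldots, a_k) \in \textstyle\prod_{i \geq 2} A_i : -\sum_i a_i \in A_1 \triangle (A_1 - s)\bigr\}
\]
by $O(|A_2| \cdots |A_{k-1}| \vol(\partial P))$. A naive pointwise bound $|A_1 \triangle (A_1 - s)| \leq 2|A_1|$ is too weak when $A_1$ is sparse in $P$: for instance, if $A_1 \subset m\Z^d \cap P$ is a sublattice then the pointwise symmetric difference is of order $|A_1|$, and the two-step bound $E_k \leq E_k^{\mathrm{cts}}(\tilde A_i) + \mathrm{error}$ fails by an order-of-magnitude factor. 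The resolution is that for such sparse $A_1$ the BLL ``gap'' $E_k^{\mathrm{cts}}(B_i) - E_k^{\mathrm{cts}}(\tilde A_i)$ absorbs most of the discretization term, leaving only a genuine boundary contribution controlled by $\vol(\partial P)$. Making this cancellation rigorous -- either by coupling the BLL and discretization steps through a quantitative stability version of the rearrangement inequality, or by a direct discrete Steiner-symmetrization argument along each coordinate axis that reduces to the one-dimensional case -- is the main technical step of the proof.
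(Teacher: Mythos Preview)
Your setup via thickening and BLL is exactly the paper's framework, and you have correctly put your finger on the real difficulty: the naive bound $|A_1\triangle(A_1-s)|\le 2|A_1|$ is useless for sparse $A_1$, so the discretization error cannot be controlled directly. However, the proposal stops precisely at this point. You offer two possible resolutions --- quantitative stability for BLL, or discrete Steiner symmetrization --- but carry out neither, so as written there is a genuine gap.

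The paper takes the second of your two routes, and the missing idea is this: one must compress \emph{before} thickening, not after. Apply the coordinate compressions $C_1,\ldots,C_d$ to each $A_j$ to obtain down-sets $A_j'$; the one-dimensional Hardy--Littlewood inequality gives $E_k(A_1,\ldots,A_k)\le E_k(A_1',\ldots,A_k')$, and each compression increases $|A_j|$ by at most $(2N_i+1)^{-1}|P|$, so $|A_j'|\le|A_j|+\vol(\partial P)$. The payoff is that a down-set $A$ satisfies
\[
|A\setminus(A-s)|\ll_s \vol(\partial P)
\]
unconditionally, because a down-set is a union of centered intervals in each coordinate direction whose lengths vary monotonically, so the symmetric difference telescopes to a genuine boundary count. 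This is the substance of the paper's Lemma~4.5, and it is exactly the estimate your argument needs but cannot get for an arbitrary $A_1$. Once the $A_j'$ are down-sets, your thickening-plus-BLL step goes through with error $O(|A_2|\cdots|A_{k-1}|\vol(\partial P))$ as desired.

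Your alternative suggestion --- absorbing the discretization error into the BLL gap via a stability version of the rearrangement inequality --- is not what the paper does and would be substantially harder; no such quantitative stability is needed here. The compression-first approach sidesteps the issue entirely by replacing the arbitrary sets with ones whose discrete boundary is already small.
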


Comparing the error term above with the trivial upper bound
$|A_1||A_2|\cdots |A_{k-1}|$ for $E_k(A_1,\cdots,A_k)$, we see that
this result is nontrivial when $A_1$ is a dense subset of $P$ and
the side lengths $N_1,\cdots,N_d$ of $P$ are all sufficiently large
(depending on the density of $A_1$). Under these assumptions Proposition
\ref{prop:Zd} essentially says that $E_k(A_1,\cdots,A_k)$ is
maximized when each $A_i$ consists of lattice points inside a ball
in $\R^d$.

The deduction of Theorem \ref{thm:bsgv} from Proposition
\ref{prop:Zd} roughly goes as follows. A standard decomposition
theorem (see Proposition \ref{prop:structure} below) allows us to
assume that $X$ (the union of $A_i$) has small doubling. By
Freiman's theorem, $X$ is contained in a progression of small rank
and size, and is thus Freiman isomorphic to a dense subset of a box
in $\Z^d$ for some small $r$. If $r\geq d+1$ and all the side
lengths of this box are large, then Proposition \ref{prop:Zd} gives
an upper bound for $E_k(A_1,\cdots,A_k)$ which contradicts the
hypothesis in Theorem \ref{thm:bsgv}. Hence the box containing $X$
can have at most $d$ large side lengths. The conclusion of Theorem
\ref{thm:bsgv} easily follows from here.

Proposition \ref{prop:Zd} will follow from its continuous analogue, which is simpler to state and to prove.

\begin{proposition}[Large additive energy in $\R^d$]\label{prop:Rd}
Let $k\geq 3$ and Let
$A_1,\cdots,A_k\subset\R^d$ be compact subsets. Let
$B_i\subset\R^d$ be the closed ball centered at the origin with $\vol(B_i)=\vol(A_i)$. Then $E_k(A_1,\cdots,A_k)\leq
E_k(B_1,\cdots,B_k)$.
\end{proposition}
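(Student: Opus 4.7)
The plan is to recognize $E_k(A_1,\ldots,A_k)$ as a multilinear integral of indicator functions composed with linear forms, and then apply a rearrangement inequality. Expanding the definition,
\[
E_k(A_1,\ldots,A_k) \;=\; \int_{\R^{d(k-1)}} \left(\prod_{i=1}^{k-1}\mathbf{1}_{A_i}(y_i)\right)\mathbf{1}_{A_k}(-y_1-\cdots-y_{k-1})\, dy_1\cdots dy_{k-1},
\]
so the integrand is a product of $k$ nonnegative functions, each evaluated at a linear combination of the variables $y_1,\ldots,y_{k-1}\in\R^d$. This is precisely the shape of integral to which multilinear rearrangement applies.

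The key tool I would invoke is the Brascamp--Lieb--Luttinger rearrangement inequality: for any nonnegative measurable $f_1,\ldots,f_n$ on $\R^d$ and any real $n\times m$ matrix $(b_{ij})$,
\[
\int_{\R^{dm}}\prod_{i=1}^n f_i\Big(\sum_{j=1}^m b_{ij}y_j\Big)\,dy \;\le\; \int_{\R^{dm}}\prod_{i=1}^n f_i^{*}\Big(\sum_{j=1}^m b_{ij}y_j\Big)\,dy,
\]
where $f_i^{*}$ denotes the symmetric decreasing rearrangement in $\R^d$. I would apply this with $n=k$, $m=k-1$, $f_i=\mathbf{1}_{A_i}$, and the $k\times(k-1)$ coefficient matrix whose first $k-1$ rows form the identity and whose last row is $(-1,-1,\ldots,-1)$. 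Since $\mathbf{1}_{A_i}^{*}=\mathbf{1}_{B_i}$ (the rearrangement of the indicator of a set of finite measure is the indicator of the centered ball of the same measure), the right-hand side is exactly $E_k(B_1,\ldots,B_k)$, and the proposition follows.

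The only real obstacle lies in justifying the rearrangement step itself. For $k=3$ one can bypass Brascamp--Lieb--Luttinger altogether and appeal to the classical Riesz rearrangement inequality applied to $\int\mathbf{1}_{A_1}(x)\mathbf{1}_{A_2}(y)\mathbf{1}_{-A_3}(x+y)\,dx\,dy$, noting that $\mathbf{1}_{-A_3}^{*}=\mathbf{1}_{B_3}$. For $k\ge 4$, however, a naive peel-off induction fails, because the symmetric rearrangement of a convolution $\mathbf{1}_{A_2}*\cdots*\mathbf{1}_{A_{k-1}}$ is generally not the convolution of the individual rearrangements; one cannot freely replace the $A_i$ by $B_i$ one at a time. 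Hence for general $k$ the multilinear statement is genuinely needed, either cited as Brascamp--Lieb--Luttinger or derived by iterated Steiner symmetrization, which reduces matters to the one-dimensional case where the inequality is classical.
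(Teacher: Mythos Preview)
Your proposal is correct: $E_k(A_1,\ldots,A_k)$ is exactly an integral of the form to which the Brascamp--Lieb--Luttinger rearrangement inequality applies, with the $k\times(k-1)$ coefficient matrix you describe, and since $\mathbf{1}_{A_i}^{*}=\mathbf{1}_{B_i}$ the conclusion is immediate. Your comments on Riesz for $k=3$ and on the failure of a naive one-at-a-time replacement for $k\geq 4$ are also accurate.

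The paper takes a more hands-on route that is in effect a self-contained proof of this special case of Brascamp--Lieb--Luttinger rather than a citation of it. It first establishes the $d=1$ case (Lemma~\ref{lem:d=1asym}) by discretizing and invoking the Hardy--Littlewood rearrangement inequality on $\Z$ (Lemma~\ref{lem:hlg}); then, for general $d$, it shows that Steiner symmetrization in any direction increases $E_k$ (Lemma~\ref{lem:steiner}, proved by slicing and applying the $d=1$ case on each fiber), and concludes via the classical fact that iterated Steiner symmetrizations bring a compact set arbitrarily close to a ball in Hausdorff distance. Your route is shorter and cleaner if one is willing to take Brascamp--Lieb--Luttinger as a black box; the paper's approach has the structural advantage that the same symmetrization mechanism is then reused, with the $d=1$ discrete inequality in place of its continuous analogue, to prove the $\Z^d$ result (Proposition~\ref{prop:Zd}), where no off-the-shelf rearrangement theorem is available and the boundary error $O(\vol(\partial P))$ must be tracked by hand.
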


In words, additive energy is maximum when the sets are balls. The
case $d=1$ easily follows from a rearrangement theorem of
Hardy-Littlewood (see Lemma \ref{lem:hlg} below). For $d\geq 2$ the
result seems to be new. There is a striking resemblance between
Proposition \ref{prop:Rd} and the Brunn-Minkowski's inequality,
which asserts that
\[ \lambda(A_1+A_2)^{1/d}\geq\lambda(A_1)^{1/d}+\lambda(A_2)^{1/d} \]
for convex bodies $A_1,A_2\subset\R^d$, with the equality achieved
if and only if $A_1$ and $A_2$ are homothetic. In fact, our proof of
Proposition \ref{prop:Rd} is inspired by Blaschke's proof of the
Brunn-Minkowski inequality, which the author learned from the
excellent survey paper by Gardner \cite{gardner}.

\subsection{Connection with Hausdorff-Young
inequality}\label{sec:hy}

In this subsection, consider the situation when $A=-A$ is a
symmetric subset of $\R^d$ and $k$ is even. In this case, standard
Fourier analysis shows that $E_k(A)=\|\widehat{1_A}\|_k^k$, where
$1_A$ is the characteristic function of $A$. For a compactly
supported continuous function $f$ on $\R^d$, an upper bound for
$\|\hat{f}\|_q$ for any $q\geq 2$ is provided by the Hausdorff-Young
inequality:
\[ \|\hat{f}\|_q\leq C_d\|f\|_p, \]
where $p$ is the conjugate exponent of $q$ satisfying $1/p+1/q=1$.
Beckner \cite{beckner} obtained this inequality with the constant
$C_d=(p^{1/2p}q^{-1/2q})^d$, which is sharp when $f$ is Gaussian.

Approximating $1_A$ by continuous functions we conclude that, for
$q$ an even integer,
\[ E_q(A)=\|\widehat{1_A}\|_q^q\leq
C_d^q\|f\|_p^q=(p^{(q-1)/2}q^{-1/2})^d|A|^{q-1}.
\]
In particular, when $q=4$ and $p=4/3$ we get $E_4(A)\leq
(4\sqrt{3}/9)^d|A|^3$, and note that $E_4(A)$ is exactly the
(unnormalized) additive energy of $A$.

Thus Proposition \ref{prop:Rd} can be thought of as an improvement
of the constant $C_d$ in the Hausdorff-Young inequality when the
function $f$ is the characteristic function of a set, and this
improvement is more significant for small $d$. In fact, our method
actually produces a sharp inequality
\[ \|\hat{f}\|_q\leq C_d'\|f\|_{\infty}^{1/q}\|f\|_1^{1/p} \]
for $q$ an even integer and $f$ a compactly supported continuous
function, with equality achieved when $f=1_B$ for some closed ball
$B\subset\R^d$ centered at the origin. But we shall not need this
generalization here. For a related result concerning near
extremizers for the Hausdorff-Young inequality see
\cite{christ1,christ2}.

Finally, note that the $q=4$ case of the Hausdorff-Young inequality
gives a sharp upper bound for the Gowers $U^2$-norm of functions on
$\R^d$. In a recent work of Eisner and Tao \cite{eisner-tao}, this
is generalized to sharp upper bounds for Gowers $U^k$-norm of
functions on $\R^d$ for $k>2$. It is an interesting problem to
investigate sharp upper bounds for Gowers $U^k$-norm of compact sets
in $\R^d$ with fixed measure. Such estimates could have applications
in problems with more combinatorial nature.

\subsection{Application to the carries problem in $\Z^d$}

Let $G$ be an arbitrary group and $H\subset G$ be a finite-index
normal subgroup. Let $A$ be a set of coset representatives for $H$
in $G$, and consider the quantity
\[ c(A)=E_3(A,A,-A)/|A|^2, \]
which counts the number of solutions to $a_1+a_2=a_3$ with $a_i\in
A$, and normalized so that $c(A)\in [0,1]$. For $G=\Z$ and $H=b\Z$,
this is related to the number of carries occuring when two digits in
base $b$ are added. See \cite{carry} for a more detailed account of
this problem and various results. Using Theorem \ref{thm:bsgv} we
obtain a nontrivial upper bound for $c(A)$ when $G=\Z^d$ and
$H=(b\Z)^d$.

\begin{corollary}[Carries problem in $\Z^d$]\label{thm:carry}
Let $\epsilon>0$. Then for a sufficiently large positive integer $b$, and any
set $A$ of coset representatives for $(b\Z)^d\subset\Z^d$, we have $c(A)\leq c_d+\epsilon$, where $c_d=E_3(B,B,-B)/\vol(B)^2$ for a closed ball $B\subset\R^d$.
\end{corollary}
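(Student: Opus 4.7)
The plan is to argue by contradiction using Theorem \ref{thm:bsgv}. Suppose $c(A) \geq c_d + \epsilon$; I will deduce that $b$ must be bounded in terms of $\epsilon$ and $d$ alone, contradicting the assumption that $b$ is sufficiently large. I apply Theorem \ref{thm:bsgv} with $k = 3$, $A_1 = A_2 = A$, $A_3 = -A$, taking the theorem's dimension parameter equal to $d - 1$, so that the balls $B_i \subset \R^d$ have volume $|A_i| = b^d$. Since each $B_i$ is centered at the origin, $-B_i = B_i$, and the definition of $c_d$ gives $E_3(B_1, B_2, B_3) = c_d \vol(B_1)^2 = c_d b^{2d}$. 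With $X = A \cup (-A)$ we have $|X| \leq 2b^d$, so the assumption on $c(A)$ yields
\[
E_3(A, A, -A) \,=\, c(A)\, b^{2d} \,\geq\, c_d b^{2d} + \epsilon b^{2d} \,\geq\, E_3(B_1, B_2, B_3) + \tfrac{\epsilon}{4}\,|X|^2.
\]
Theorem \ref{thm:bsgv} therefore produces a proper progression $Q \subset \Z^d$ of rank at most $d - 1$ with $|Q \cap X| \gg_{\epsilon} |X| \geq b^d$.

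Since $|Q \cap X| \leq |Q \cap A| + |Q \cap (-A)|$ and $|(-Q) \cap A| = |Q \cap (-A)|$, at least one of $|Q \cap A|$ and $|(-Q) \cap A|$ is $\gg_\epsilon b^d$. Replacing $Q$ by $-Q$ if necessary (which preserves the rank), I may assume $|Q \cap A| \gg_\epsilon b^d$.

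Next, I reduce modulo $(b\Z)^d$. Writing $Q = x_0 + \{n_1 x_1 + \cdots + n_{d-1} x_{d-1} : 0 \leq n_i < N_i\}$ with $x_0, x_1, \ldots, x_{d-1} \in \Z^d$, the image of $Q$ in $(\Z/b\Z)^d$ is contained in a single coset of the subgroup $H \leq (\Z/b\Z)^d$ generated by the reductions $\bar x_1, \ldots, \bar x_{d-1}$. Since $H$ is the image of a homomorphism from $(\Z/b\Z)^{d-1}$, its size satisfies $|H| \leq b^{d-1}$. Because the elements of $A$ are pairwise distinct modulo $(b\Z)^d$, the reduction map is injective on $Q \cap A$, hence $|Q \cap A| \leq |H| \leq b^{d-1}$. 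Combined with $|Q \cap A| \gg_\epsilon b^d$, this forces $b \ll_\epsilon 1$, the desired contradiction.

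I do not anticipate a genuine obstacle in executing this plan: Theorem \ref{thm:bsgv} does all of the heavy lifting, and the remainder is a clean modular-projection argument exploiting the mismatch between the rank $d - 1$ of $Q$ and the ``dimension'' $d$ of the coset representative system for $(b\Z)^d \subset \Z^d$. The only minor bookkeeping is the reduction of $X = A \cup (-A)$ to $A$ via the symmetry $Q \leftrightarrow -Q$, and the verification that the gap $\epsilon b^{2d}$ in additive energy indeed dominates a constant multiple of $|X|^{k-1}$.
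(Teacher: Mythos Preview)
Your proposal is correct. Both you and the paper argue by contradiction and invoke Theorem~\ref{thm:bsgv} with dimension parameter $d-1$ to produce a proper progression $Q$ of rank at most $d-1$ satisfying $|Q\cap A|\gg_\epsilon b^d$ (the reduction from $X=A\cup(-A)$ to $A$ via $Q\leftrightarrow -Q$, which you spell out, is left implicit in the paper). Where the arguments diverge is in the endgame: the paper uses the size bound $|Q|\ll_\epsilon b^d$ to locate a long direction $N_j\gg_\epsilon b^{d/(d-1)}$, pigeonholes over the residue $n_j\pmod b$, and observes that within a fixed residue class the points on any line in direction $x_j$ are congruent modulo $(b\Z)^d$, giving $|Q_r\cap A|\leq \prod_{i\neq j}N_i$. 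You instead reduce the whole progression modulo $(b\Z)^d$ and note that its image lies in a single coset of the subgroup generated by $\bar x_1,\ldots,\bar x_{d-1}$, which (as a quotient of $(\Z/b\Z)^{d-1}$) has order at most $b^{d-1}$; injectivity of the reduction on $A$ then gives $|Q\cap A|\leq b^{d-1}$ immediately. Your route is shorter and, notably, does not use the upper bound $|Q|\ll_\epsilon |X|$ from Theorem~\ref{thm:bsgv} at all; the paper's pigeonhole-on-a-long-direction argument needs that bound to ensure some $N_j$ is large.
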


\begin{proof}
Suppose that $c(A)\geq c_d+\epsilon$. By Theorem \ref{thm:bsgv}
there is a $(d-1)$-dimensional proper progression $Q$ of the form
\[ Q=\{x_0+x_1n_1+\cdots+x_{d-1}n_{d-1}:n_i\in\Z,0\leq n_i<N_i\}  \]
for some $x_0,x_1,\cdots,x_{d-1}\in\Z^d$, such that
$|Q|=N_1N_2\cdots N_{d-1}\ll_{\epsilon}|A|$ and $|Q\cap
A|\gg_{\epsilon}|A|$.

Since $|A|=b^d$ and $|Q|\gg_{\epsilon}|A|$, we have
$N_j\gg_{\epsilon}b^{d/(d-1)}$ for some $1\leq j\leq d-1$. Fix such
a $j$. For any $r\pmod b$, let
\[ Q_r=\{x_0+x_1n_1+\cdots+x_{d-1}n_{d-1}:n_i\in\Z,0\leq n_i<N_i,n_j\equiv r\pmod b\}. \]
Then $|Q_r\cap A|\gg_{\epsilon}|A|/b=b^{d-1}$ for some $r$. On the
other hand, since $A$ consists of coset representatives for
$(b\Z)^d$ in $\Z^d$, for any fixed
$n_1,\cdots,n_{j-1},n_{j+1},\cdots,n_{d-1}$ there is at most one
value of $n_j$ such that $x_0+x_1n_1+\cdots+x_{d-1}n_{d-1}\in
Q_r\cap A$. Hence
\[ b^{d-1}\ll_{\epsilon}|Q_r\cap A|\leq \prod_{i\neq j}N_i=\frac{|Q|}{N_j}\ll_{\epsilon} b^{-d/(d-1)}|A|. \]
This is a contradiction for sufficiently large $b$.
\end{proof}

In particular, one can compute that $c_1=3/4$ and
$c_2=1-3\sqrt{3}/4\pi\approx 0.59$. We conjecture that $c(A)\leq
(3/4)^d+\epsilon$ is the optimal bound, achieved (for odd $b$) when
$A$ is the square box $[-(b-1)/2,(b-1)/2]^d$ centered at the origin.

%%%%%%%%%%%%%%%%%%%%%%%%%%%%%%%%%%%%%%%%%%%%%%%%%%%%%%%
%%%%%%%%%%%%%%%%%%%%%%% R^d %%%%%%%%%%%%%%%%%%%%%%%%%%%
%%%%%%%%%%%%%%%%%%%%%%%%%%%%%%%%%%%%%%%%%%%%%%%%%%%%%%%

\section{Large Additive Energy in $\R^d$}\label{sec:Rd}

In this section we prove Proposition \ref{prop:Rd}.

\subsection{The case $d=1$}

The $d=1$ case will follow from its discrete analogue due to Hardy
and Littlewood. See Theorem 376 of \cite{hardy-littlewood-polya},
and also \cite{gabriel,lev} for some related results.

\begin{lemma}[Hardy-Littlewood]\label{lem:hlg}
Let $k\geq 3$ and $A_1,\cdots,A_k\subset\Z$ be finite subsets with
$|A_i|$ odd ($1\leq i\leq k$). Let $I_i\subset\Z$ be the interval
centered at the origin with $|I_i|=|A_i|$. Then
$E_k(A_1,\cdots,A_k)\leq E_k(I_1,\cdots,I_k)$.
\end{lemma}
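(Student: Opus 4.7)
The plan is to recognize $E_k(A_1,\ldots,A_k)$ as a ``multilinear convolution'' on $\Z$ and then reduce the lemma to a multilinear rearrangement inequality, which is essentially the content of Theorem 376 in Hardy--Littlewood--P\'olya. Concretely, I would first rewrite
\[ E_k(A_1,\ldots,A_k) = \sum_{(a_1,\ldots,a_{k-1})\in\Z^{k-1}} \prod_{i=1}^{k-1} 1_{A_i}(a_i)\cdot 1_{A_k}\!\Bigl(-\textstyle\sum_{i=1}^{k-1}a_i\Bigr), \]
so that $E_k$ is a sum over a lattice of a product of indicator functions evaluated at integer linear forms in the free variables $a_1,\ldots,a_{k-1}$. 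The hypothesis that each $|A_i|$ is odd is there precisely to guarantee that $1_{I_i}$ is \emph{the} symmetric decreasing rearrangement of $1_{A_i}$ as a function on $\Z$, so that the conclusion of any rearrangement inequality one invokes reads as the stated inequality $E_k(A_1,\ldots,A_k)\leq E_k(I_1,\ldots,I_k)$.

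With that reformulation in hand, the key input is a discrete version of the Brascamp--Lieb--Luttinger (BLL) rearrangement inequality: for nonnegative functions $f_1,\ldots,f_k$ on $\Z$ and integer linear forms $L_1,\ldots,L_k$ on $\Z^{k-1}$,
\[ \sum_{x\in\Z^{k-1}} \prod_{j=1}^k f_j(L_j(x)) \;\leq\; \sum_{x\in\Z^{k-1}} \prod_{j=1}^k f_j^*(L_j(x)), \]
where $f_j^*$ denotes the symmetric decreasing rearrangement on $\Z$. Taking $f_j=1_{A_j}$, $L_j(x)=x_j$ for $j\le k-1$, and $L_k(x)=-x_1-\cdots-x_{k-1}$ then yields the lemma.

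If one prefers not to cite the full BLL inequality, one can argue instead by induction on $k$. The base case $k=3$ reduces, via the substitution $B=-A_3$ (which has $|B|=|A_3|$) and the identity $1_{A_3}(-x-y)=1_B(x+y)$, to the classical three-function Riesz rearrangement inequality
\[ \sum_{x,y\in\Z} f(x)\,g(y)\,h(x+y) \;\leq\; \sum_{x,y\in\Z} f^*(x)\,g^*(y)\,h^*(x+y), \]
which on $\Z$ is proved by iterating two-point (Steiner) symmetrizations. The main obstacle is the inductive step: the naive strategy of rearranging one set at a time does not close, because after writing $E_k = \sum_x 1_{A_1}(x)\,g(x)$ with $g(x)=(1_{A_2}*\cdots*1_{A_k})(-x)$ and applying the one-function rearrangement (HLP Theorem~368) one arrives at $\sum_x 1_{I_1}(x)\,g^*(x)$, but $g^*$ is not manifestly the analogous convolution built from $I_2,\ldots,I_k$. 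The cleanest way around this is to perform Steiner symmetrization directly in the multi-dimensional index space $\Z^{k-1}$ along carefully chosen coordinate directions, which is essentially the proof of BLL in our setting; each symmetrization step reduces to a one-dimensional Riesz-type inequality, and iterating drives the configuration to $(I_1,\ldots,I_k)$.
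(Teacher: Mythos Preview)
Your proposal is correct and in fact lines up exactly with the paper's treatment: the paper does not supply an independent proof of this lemma but simply cites it as Theorem~376 of Hardy--Littlewood--P\'olya (with related references to Gabriel and Lev), which is precisely the multilinear rearrangement inequality you identify. Your additional sketch of the underlying mechanism---recasting $E_k$ as a sum of a product of indicators evaluated at linear forms, noting that oddness of $|A_i|$ makes $1_{I_i}$ the symmetric decreasing rearrangement of $1_{A_i}$, and appealing to discrete Brascamp--Lieb--Luttinger or iterated Steiner symmetrizations---is sound and goes beyond what the paper records, but there is no divergence in approach to report.
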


\begin{lemma}[$d=1$ case]\label{lem:d=1asym}
Let $k\geq 3$ and $A_1,\cdots,A_k\subset\R$ be compact subsets. Let
$I_i\subset\Z$ be the closed interval centered at the origin with
$\vol(I_i)=\vol(A_i)$. Then $E_k(A_1,\cdots,A_k)\leq
E_k(I_1,\cdots,I_k)$.
\end{lemma}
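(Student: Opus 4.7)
The plan is to reduce Lemma \ref{lem:d=1asym} to the discrete Hardy--Littlewood lemma (Lemma \ref{lem:hlg}) by discretizing the compact sets on a fine lattice and taking a limit. The intermediate step is to first approximate each $A_i$ by a finite union of closed intervals with rational endpoints, because such sets are Jordan measurable and their discretizations are well-controlled.

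Concretely, I would first fix $\delta>0$ and use outer regularity of Lebesgue measure, together with compactness of $A_i$, to find a finite union of closed intervals $A_i^{*}\supset A_i$ with rational endpoints and $\vol(A_i^{*})\leq\vol(A_i)+\delta$. Monotonicity of $E_k$ in each argument gives
\[
E_k(A_1,\dots,A_k)\leq E_k(A_1^{*},\dots,A_k^{*}),
\]
so it suffices to prove the inequality for the $A_i^{*}$, with the intervals on the right-hand side replaced by the centered intervals $I_i^{*}$ of length $\vol(A_i^{*})$. Since $E_k(I_1^{*},\dots,I_k^{*})\to E_k(I_1,\dots,I_k)$ as $\delta\to 0$ (just a continuous shrinking of the intervals), we may finish by sending $\delta\to 0$.

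Next, pick $\epsilon=1/N$ where $N$ is a common multiple of all denominators of the endpoints of the intervals making up the $A_i^{*}$, and set $\tilde{A}_i=\{n\in\Z:n\epsilon\in A_i^{*}\}$. Then $\epsilon|\tilde{A}_i|=\vol(A_i^{*})+O(\epsilon)$, and a routine Riemann-sum argument (valid because each $A_i^{*}$ is a finite union of intervals, hence Jordan measurable) shows
\[
\epsilon^{k-1}E_k(\tilde{A}_1,\dots,\tilde{A}_k)\longrightarrow E_k(A_1^{*},\dots,A_k^{*}) \qquad \text{as } N\to\infty.
\]
If any $|\tilde{A}_i|$ is even, I would add or remove a single integer far from the others to make it odd; this perturbs $E_k$ by at most $O(\prod_{j\neq i}|\tilde{A}_j|)$, which becomes negligible after multiplying by $\epsilon^{k-1}$ and letting $N\to\infty$. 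Applying Lemma \ref{lem:hlg} to the adjusted sets and then letting $\tilde{I}_i$ denote the centered integer interval of the same cardinality yields
\[
\epsilon^{k-1}E_k(\tilde{A}_1,\dots,\tilde{A}_k)\leq \epsilon^{k-1}E_k(\tilde{I}_1,\dots,\tilde{I}_k)+o(1),
\]
and the right-hand side converges to $E_k(I_1^{*},\dots,I_k^{*})$ by the same Riemann-sum reasoning applied in reverse. Sending $N\to\infty$ and then $\delta\to 0$ completes the proof.

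The step I expect to need the most care is the Riemann-sum convergence $\epsilon^{k-1}E_k(\tilde{A}_1,\dots,\tilde{A}_k)\to E_k(A_1^{*},\dots,A_k^{*})$; the reason for replacing the compact sets $A_i$ by finite unions of intervals $A_i^{*}$ at the outset is precisely to avoid pathologies such as fat Cantor sets, whose boundaries have positive Lebesgue measure and for which the indicator function is not Riemann integrable. Once that approximation is in place, the argument is essentially bookkeeping: the combinatorial inequality of Hardy--Littlewood transfers to the continuous setting by a standard rescaling and limit, with the centered-interval configuration surviving as the extremizer.
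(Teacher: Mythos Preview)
Your proposal is correct and follows essentially the same route as the paper: approximate each $A_i$ by a finite union of intervals with rational endpoints, discretize on a fine lattice, apply the discrete Hardy--Littlewood inequality (Lemma~\ref{lem:hlg}), and pass to the limit. The paper implements the discrete-to-continuous passage via an explicit decomposition of $E_k$ into a discrete count times small-interval energies rather than a Riemann-sum argument, but the strategy is the same; one small slip to tidy is that your stated parity-adjustment bound $O(\prod_{j\neq i}|\tilde A_j|)$ is only $O(1)$ after scaling by $\epsilon^{k-1}$, though since the added point is chosen far away the actual perturbation is zero (or use the sharper bound $O(\prod_{j\neq i,\,j\neq j_0}|\tilde A_j|)$), so the argument goes through.
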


\begin{proof}
By standard measure theory, we may approximate each $A_i$ by a
finite disjoint union of intervals, and we may also assume that all
these intervals have rational endpoints. Let $M$ be a common
denominator of all these endpoints. By dividing intervals into
subintervals if necessary, we may assume that all these intervals
have length $1/M$. Finally, we may further assume that the number of
these intervals is odd. This leads to considering the case when
$A_i$ takes the form
\[ A_i=\bigcup_{j=1}^{2n_i+1}U_{ij}, \]
where $U_{ij}=[a_{ij},a_{ij}+1/M]$ for some positive integer $M$ and
rationals $a_{ij}$ with $Ma_{ij}\in\Z$, so that
$\vol(A_i)=(2n_i+1)/M$. Let $A_i'\subset\Z$ be the set
$\{Ma_{ij}:1\leq j\leq 2n_i+1\}$ and $I_i'$ be the interval
$I_i'=[-n_i,n_i]\cap\Z$.

Denote by $U$ the interval $[0,1/M]$. Note that
\[
E_k(A_1,\cdots,A_k)=\sum_{j_1,\cdots,j_k}E_k(U_{1j_1},\cdots,U_{kj_k})=\sum_{j_1,\cdots,j_k}E_k(U+a_{1j_1}+\cdots+a_{kj_k},U,\cdots,U).
\]
For any $s$ with $Ms\in\Z$, the number of ways to write
$s=a_{1j_1}+\cdots+a_{kj_k}$ for some $j_1,\cdots,j_k$ is equal to
$E_k(A_1'-Ms,A_2',\cdots,A_k')$. Hence
\[ E_k(A_1,\cdots,A_k)=\sum_sE_k(A_1'-Ms,A_2',\cdots,A_k')\cdot E_k(U+s,U,\cdots,U). \]
By Lemma \ref{lem:hlg}, we have $E_k(A_1'-Ms,A_2',\cdots,A_k')\leq
E_k(I_1',\cdots,I_k')$. It follows that
\begin{equation}\label{d=11}
E_k(A_1,\cdots,A_k)\leq
E_k(I_1',\cdots,I_k')\sum_sE_k(U+s,U,\cdots,U)=\frac{1}{M^{k-1}}E_k(I_1',\cdots,I_k').
\end{equation}

Now consider $E_k(I_1,\cdots,I_k)$, where $I_i$ is the interval
$[-(2n_i+1)/(2M),(2n_i+1)/(2M)]$. Denote by $V$ the interval
$[-1/2M,1/2M]$. A similar analysis as above shows that
\begin{align*}
E_k(I_1,\cdots,I_k) &=\sum_{a_1\in I_1'}\cdots\sum_{a_k\in
I_k'}E_k\left(V+\frac{a_1+\cdots+a_k}{M},V,\cdots,V\right)
\\
&=\sum_sE_k(I_1'-Ms,I_2',\cdots,I_k')\cdot E_k(V+s,V,\cdots,V).
\end{align*}
Note that $E_k(V+s,V,\cdots,V)$ vanishes unless $|s|\leq k/M$, and
for $|s|\leq k/M$,
\[ E_k(I_1'-Ms,I_2',\cdots,I_k')\geq E_k((I_1'-Ms)\cap
I_1',I_2',\cdots,I_k')\geq E_k(I_1',\cdots,I_k')-k|I_2'|\cdots
|I_{k-1}'|. \] Combining this with \eqref{d=11} we get
\begin{align*}
 E_k(I_1,\cdots,I_k) &\geq
[E_k(I_1',\cdots,I_k')-k|I_2'|\cdots
|I_{k-1}'|]\sum_sE_k(V+s,V,\cdots,V)
\\
&\geq
\frac{1}{M^{k-1}}E_k(I_1',\cdots,I_k')-\frac{k}{M^{k-1}}|I_2'|\cdots
|I_{k-1}'| \\
&\geq
E_k(A_1,\cdots,A_k)-\frac{k}{M}\lambda(A_2)\cdots\lambda(A_{k-1}).
\end{align*}
Letting $M\rightarrow\infty$ we get $E_k(I_1,\cdots,I_k)\geq
E_k(A_1,\cdots,A_k)$.
\end{proof}

\subsection{The general case}

The main tool in this section is the \textit{Steiner
symmetrization}. For any bounded subset $K\subset\R^d$ and nonzero
vector $u\in\R^d$, the \textit{Steiner symmetral} $S_uK$ of $K$ in
the direction $u$ is the set obtained from $K$ by sliding each of
its chords parallel to $u$ so that they are bisected by the
hyperplane $u^{\perp}$ and taking the union of the resulting chords.
In other words, for any $b\in u^{\perp}$, if we let $J_u(K;b)=(b+\R
u)\cap K$ and $\ell_u(K;b)=\lambda(J_u(K;b))$ then
\[ S_u(K)=\bigcup_{b\in u^{\perp}}\{b+xu:x\in I(\ell_u(K;b))\}, \]
where $I(\ell)$ denote the interval $[-\ell/2,\ell/2]$.

Basic properties of Steiner symmetrals can be found in (2.10.30) of
\cite{federer}. In particular, if $K$ is compact, then $S_u(K)$ is
also compact. The following lemma shows that Steiner symmetrization
increases $E_k$.

\begin{lemma}\label{lem:steiner}
For any compact subsets $K_1,\cdots,K_k\subset\R^d$ and any nonzero vector $u\in\R^d$, we have $E_k(K_1,\cdots,K_k)\leq E_k(S_u(K_1),\cdots,S_u(K_k))$.
\end{lemma}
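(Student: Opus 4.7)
The plan is to reduce to the one-dimensional Lemma \ref{lem:d=1asym} by slicing perpendicular to $u$. Without loss of generality I assume $|u| = 1$, since Steiner symmetrization depends only on the direction of $u$. Identify $\R^d = u^{\perp} \oplus \R u$ via the isometry $(b, x) \mapsto b + x u$, and decompose each $a_i \in \R^d$ uniquely as $a_i = b_i + x_i u$ with $b_i \in u^{\perp}$ and $x_i \in \R$. Writing
\[ K_i^{b} \assign \{x \in \R : b + x u \in K_i\}, \]
the chord of $K_i$ over $b$ is a compact subset of $\R$ with $\vol(K_i^{b}) = \ell_u(K_i; b)$. The crucial observation is that the constraint $a_1 + \cdots + a_k = 0$ decouples into $b_1 + \cdots + b_k = 0$ in $u^{\perp}$ and $x_1 + \cdots + x_k = 0$ in $\R$.

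Applying Fubini to the definition of $E_k(K_1, \ldots, K_k)$, with the measure-preserving change of variables $a_i \leftrightarrow (b_i, x_i)$, I obtain the identity
\[ E_k(K_1, \ldots, K_k) = \int_{(u^{\perp})^{k-1}} E_k\bigl(K_1^{b_1}, \ldots, K_{k-1}^{b_{k-1}}, K_k^{b_k}\bigr)\, db_1 \cdots db_{k-1}, \]
where $b_k \assign -b_1 - \cdots - b_{k-1}$ and the inner $E_k$ is the one-dimensional additive energy of the $k$ compact subsets of $\R$ appearing in the argument. The identical identity holds with every $K_i$ replaced by $S_u(K_i)$, since Steiner symmetrization manifestly commutes with the slicing operation $K \mapsto K^{b}$ and preserves the ambient $u^{\perp}$-coordinate.

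To finish, for each fixed tuple $(b_1, \ldots, b_{k-1})$ (which determines $b_k$), Lemma \ref{lem:d=1asym} applied to the one-dimensional compact sets $K_1^{b_1}, \ldots, K_k^{b_k}$ upper-bounds the integrand by $E_k(I_1, \ldots, I_k)$, where $I_i$ is the closed interval centered at the origin of length $\ell_u(K_i; b_i)$. By the very definition of Steiner symmetrization, $(S_u K_i)^{b_i} = I_i$, so the pointwise slice inequality
\[ E_k\bigl(K_1^{b_1}, \ldots, K_k^{b_k}\bigr) \leq E_k\bigl((S_u K_1)^{b_1}, \ldots, (S_u K_k)^{b_k}\bigr) \]
holds for every $(b_1, \ldots, b_{k-1}) \in (u^{\perp})^{k-1}$. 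Integrating and invoking the Fubini identity twice yields the desired inequality.

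The only point that requires care is setting up the Fubini identity cleanly; empty or degenerate slices contribute $0$ on both sides and cause no issue, and measurability of the relevant indicators is automatic since the $K_i$ are compact. The heart of the proof is the observation that the sum-zero constraint splits along $u^{\perp}$ and $\R u$, which reduces a $d$-dimensional rearrangement inequality to the one-dimensional case that has already been established.
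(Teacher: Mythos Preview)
Your proof is correct and follows essentially the same approach as the paper's: slice each $K_i$ along lines parallel to $u$, apply the one-dimensional case (Lemma \ref{lem:d=1asym}) to the resulting chords for each tuple $(b_1,\ldots,b_k)$ with $b_1+\cdots+b_k=0$, and integrate. The paper's version is terser but identical in substance; your explicit setup of the Fubini identity and the decoupling of the constraint $a_1+\cdots+a_k=0$ into its $u^{\perp}$ and $\R u$ components simply spells out what the paper leaves implicit.
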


\begin{proof}
By Lemma \ref{lem:d=1asym} we have, for any $b_1,\cdots,b_k\in
u^{\perp}$,
\[ E_k(J_u(K_1;b_1),\cdots,J_u(K_k;b_k))\leq
E_k(I(\ell_u(K_1;b_1)),\cdots,I(\ell_u(K_k;b_k))).
\]
Note that $E_k(K_1,\cdots,K_k)$ and $E_k(S_u(K_1),\cdots,S_u(K_k))$
are the integrals of the left side and the right side above,
respectively, over the region $b_1+\cdots+b_k=0$. The desired
inequality then follows immediately.
\end{proof}

A classical result in geometric measure theory states that any
compact subset $K\subset\R^d$ can be transformed arbitrarily close
to a ball using Steiner symmetrizations. More precisely, define the
Hausdorff distance $d_H$ between two compact subsets
$C,D\subset\R^d$ to be
\[ d_H(C,D)=\sup_{x\in\R^d}|\text{dist}(x,C)-\text{dist}(x,D)|, \]
where $\text{dist}(x,C)$ and $\text{dist}(x,D)$ are the distances
from $x$ to $C$ and $D$, respectively.

\begin{lemma}\label{lem:approx}
Given a compact subset $K\subset\R^d$, let $B\subset\R^d$ be the
ball centered at the origin with $\vol(B)=\vol(K)$. For any
$\epsilon>0$, there exists a finite sequence of nonzero vectors
$u_1,\cdots,u_n$ such that $d_H(B,S_{u_n}\cdots
S_{u_1}(K))<\epsilon$.
\end{lemma}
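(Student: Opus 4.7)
The plan is to prove Lemma~\ref{lem:approx} by the classical scheme combining a Steiner-monotone functional with a Blaschke-type compactness argument; I will assume $\vol(K)>0$ (the other case being degenerate). First I would establish that all iterated Steiner symmetrizations of $K$ stay in a fixed bounded region. Choosing $R$ large enough that $K \subset \{x:|x|\le R\}$ and using the monotonicity of $S_u$ under inclusion together with the fact that the centered ball $\{|x|\le R\}$ is invariant under $S_u$, every iterated Steiner symmetrization of $K$ lies in the space $\mathcal{K}_R$ of nonempty compact subsets of $\{|x|\le R\}$. By Blaschke's selection theorem, $\mathcal{K}_R$ is compact in the Hausdorff metric $d_H$.

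Next I would introduce the second-moment functional
\[
f(K') := \int_{K'} |x|^2 \, d\vol(x)
\]
and establish two key properties. The first is \emph{monotonicity}: $f(S_u(K')) \le f(K')$ for any compact $K'$ and any nonzero $u$. Using the orthogonal decomposition $|x|^2 = t^2 + |b|^2$ with $b \in u^\perp$ and $t$ the coordinate along $u/|u|$, this reduces by Fubini to the one-dimensional statement that $\int_J t^2 \, dt$ over a measurable $J\subset\R$ of fixed Lebesgue measure $\ell$ is minimized when $J = [-\ell/2,\ell/2]$, a direct bathtub-principle computation. The second property is a \emph{sharp lower bound}: among compact subsets of $\R^d$ of volume $\vol(K)$, the functional $f$ is uniquely (modulo null sets) minimized by the centered ball $B$, again by the bathtub principle.

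With these tools in hand, consider the family $\mathcal{F} \subset \mathcal{K}_R$ of all iterated Steiner symmetrizations of $K$, set $\alpha := \inf_{K' \in \mathcal{F}} f(K') \ge f(B)$, and choose a sequence $K_n \in \mathcal{F}$ with $f(K_n) \to \alpha$. By Blaschke's theorem, pass to a subsequence $K_n \to K^*$ in Hausdorff metric. The crux is to show $K^*=B$; once this is done, $d_H(K_n,B) \to 0$ along the subsequence and the lemma follows by selecting $n$ large enough that $d_H(K_n,B)<\epsilon$.

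The main obstacle is identifying $K^*$ with $B$, complicated by the fact that both $\vol$ and $f$ are only upper semi-continuous under Hausdorff convergence (outer regularity gives $\limsup \vol(K_n) \le \vol(K^*)$ but no matching lower bound in general). The intended strategy is to argue that $K^*$ must be invariant under Steiner symmetrization in every direction: if some $S_u(K^*) \neq K^*$ strictly, then monotonicity of $f$ and an appropriate continuity property of $S_u$ at $K^*$ would allow one to push further along the sequence and descend below $\alpha$. Iterated reflections in hyperplanes through the origin then force $K^*$ to be invariant under the full orthogonal group $O(d)$, hence a radial set (a union of concentric spheres). A volume comparison using $\vol(K_n) = \vol(K)$ together with the sharp minimality of $B$ from the bathtub principle finally pins down $K^*=B$; a careful treatment of a statement of this flavor appears in the standard references on Steiner symmetrization.
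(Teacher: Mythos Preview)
The paper does not actually prove this lemma; its proof is the single line ``See (2.10.31) of \cite{federer}.'' So there is no in-paper argument to compare against, and your proposal is already more detailed than what the paper provides. The scheme you outline---Blaschke selection combined with a Steiner-monotone functional---is indeed the classical route to results of this type.

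That said, the difficulty you flag in your last paragraph is genuine and your sketch does not resolve it. Two problems are intertwined. First, since $f$ and $\vol$ are only \emph{upper} semicontinuous under Hausdorff convergence, from $K_n\to K^\ast$ with $f(K_n)\to\alpha$ you obtain only $f(K^\ast)\ge\alpha$ and $\vol(K^\ast)\ge\vol(B)$; these inequalities point the wrong way to identify $K^\ast$ (nothing you have written excludes $K^\ast$ being a ball strictly larger than $B$, since Hausdorff limits can gain volume). Second, the ``continuity property of $S_u$'' you invoke simply fails for non-convex compact sets: a sequence of finite sets can Hausdorff-converge to a solid ball while every $S_u$-image collapses into the hyperplane $u^\perp$. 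More fundamentally, the second-moment functional controls symmetric difference, not Hausdorff distance---from $f(K_n)\to f(B)$ and $\vol(K_n)=\vol(B)$ the quantitative bathtub principle gives $\vol(K_n\triangle B)\to 0$, but the example $K_n=B\cup\{p\}$ with a single distant point $p$ shows this does not force $d_H(K_n,B)\to 0$. Thus your argument, as written, would establish convergence of iterated symmetrizations to $B$ \emph{in measure}, which is true and useful, but not the Hausdorff convergence the lemma asserts. Bridging that last gap is exactly the non-trivial content of the reference the paper cites; in the end both you and the paper defer to the literature for it.
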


\begin{proof}
See (2.10.31) of \cite{federer}.
\end{proof}

For a ball $B\subset\R^d$ centered at the origin with radius $r$,
let $B(\epsilon)$ be the ball centered at the origin with radius
$r+\epsilon$. Note that if $d_H(K,B)<\epsilon$ for some compact $K$,
then $K\subset B(\epsilon)$.

\begin{proof}[Proof of Proposition \ref{prop:Rd}]
Fix any $\epsilon>0$. By Lemma \ref{lem:approx} there exists a
sequence of Steiner symmetrizations $S_{u_1},\cdots,S_{u_{n_1}}$
such that $d_H(S_{u_{n_1}}\cdots S_{u_1}(A_1),B_1)<\epsilon$. Again
by Lemma \ref{lem:approx} there exists a sequence of Steiner
symmetrizations $S_{u_{n_1+1}},\cdots,S_{u_{n_2}}$ such that
$d_H(S_{u_{n_2}}\cdots S_{u_1}(A_2),B_2)<\epsilon$. Repeating this
process $k$ times, we get a sequence of Steiner symmetrizations
$S_{u_1},\cdots,S_{u_{n_k}}$ such that
\[ d_H(S_{u_{n_i}}\cdots S_{u_1}(A_i),B_i)<\epsilon \]
for each $1\leq i\leq k$.

Let $A_i''=S_{u_{n_i}}\cdots S_{u_1}(A_i)$ and
$A_i'=S_{u_{n_k}}\cdots S_{u_1}(A_i)$. Since
$d_H(A_i'',B_i)<\epsilon$, we have $A_i''\subset B_i(\epsilon)$, and
thus $A_i'\subset B_i(\epsilon)$. By Lemma \ref{lem:steiner} we have
\[ E_k(A_1,\cdots,A_k)\leq E_k(A_1',\cdots,A_k'). \]
Write $E_k(A_1',\cdots,A_k')$ as the sum of $2^k$ terms, each of the
form $E_k(C_1,\cdots,C_k)$ where $C_i$ is either $A_i'\cap B_i$ or
$A_i'\setminus B_i$. The term with $C_i=A_i'\cap B_i$ for each
$1\leq i\leq k$ contributes at most $E_k(B_1,\cdots,B_k)$, and all
the other terms contribute at most $M\vol(A_i'\setminus B_i)$ for
some constant $M$ depending on $A_1,\cdots,A_k$. Hence
\[ E_k(A_1,\cdots,A_k)\leq E_k(A_1',\cdots,A_k')\leq
E_k(B_1,\cdots,B_k)+O_{A_1,\cdots,A_k}\left(\sum_{i=1}^k\vol(A_i'\setminus
B_i)\right). \] Since $A_i'\subset B_i(\epsilon)$,
$\vol(A_i'\setminus B_i)\rightarrow 0$ as $\epsilon\rightarrow 0$.
Letting $\epsilon\rightarrow 0$ we get $E_k(A_1,\cdots,A_k)\leq
E_k(B_1,\cdots,B_k)$ as desired.
\end{proof}

%%%%%%%%%%%%%%%%%%%%%%%%%%%%%%%%%%%%%%%%%%%%%%%%%%%%%
%%%%%%%%%%%%%%%%%%%%%%%% Z^d %%%%%%%%%%%%%%%%%%%%%%%%
%%%%%%%%%%%%%%%%%%%%%%%%%%%%%%%%%%%%%%%%%%%%%%%%%%%%%

\section{Large Additive Energy in $\Z^d$}\label{sec:Zd}

In this section we prove Proposition \ref{prop:Zd}. The proof uses
the machinery of {\em compressions}, which seems to originate from
the work of Freiman \cite{freiman} and also appeared in
\cite{bollobas-leader,gardner-gronchi,green-tao}, though our precise
definitions of this term will differ slightly from those in the
literature. For a finite subset $A\subset\Z^d$ and any $1\leq i\leq
d$, define the $i$-compression $C_i(A)$ of $A$ as follows. Identify
$\Z^d$ with the direct product $\Z^{d-1}\times\Z$ via the
isomorphism $\sigma_i:\Z^d\rightarrow\Z^{d-1}\times\Z$ defined by
$\sigma_i(x_1,\cdots,x_d)=((x_1,\cdots,x_{i-1},x_{i+1},\cdots,x_d),x_i)$.
For any $b\in\Z^{d-1}$, let $J_i(A;b)=\{c\in\Z:(b,c)\in A\}$ and
write $\ell_i(A;b)=\# J_i(A;b)$.  For any positive integer $\ell$,
let $I(\ell)\subset\Z$ be the interval centered at the origin with
length $\ell$ if $\ell$ is odd, and length $\ell+1$ if $\ell$ is
even. Finally define
\[ C_i(A)=\bigcup_{\substack{b\in\Z^{d-1}\\ \ell_i(A;b)>0}}\{b\}\times I(\ell_i(A;b)). \]
The set $A$ is called $i$-compressed if $C_i(A)=A$, and is called a
down-set if it is $i$-compressed for each $1\leq i\leq d$.

This compression operation is the discrete analogue of Steiner
symmetrization used in Section \ref{sec:Rd}. In the proof of
Proposition \ref{prop:Rd} we applied a sequence of Steiner
symmetrizations to transform an arbitrary compact set to a ball.
Here we will apply a sequence of compressions to transform an
arbitrary finite set to a down-set, and then argue that down-sets
can be very well approximated by compact sets (in terms of
estimating $E_k$).

We first record a simple useful lemma.

\begin{lemma}\label{lem:onceforall}
Let $A\subset\Z^d$ be a finite subset. If $A$ is $i$-compressed for
some $1\leq i\leq d$, then $C_jA$ remains $i$-compressed for any
$1\leq j\leq d$.
\end{lemma}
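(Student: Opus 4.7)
The case $i=j$ is immediate, since $C_jA=C_iA=A$ by hypothesis; so assume $i\ne j$. I would group the coordinates of $\Z^d$ as $(x_i,x_j,z)$ with $z\in\Z^{d-2}$ indexing the remaining $d-2$ coordinates, and fix $z$ throughout the argument.

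The key step is the following monotonicity observation: if $A$ is $i$-compressed, then the $j$-fiber size
\[ \ell_j(A;(x_i,z))=\#\{x_j\in\Z:(x_i,x_j,z)\in A\} \]
depends only on $|x_i|$ and is non-increasing in $|x_i|$. Indeed, $i$-compression asserts that $(x_i,x_j,z)\in A$ iff $x_i\in I(\ell_i(A;(x_j,z)))$, which is a condition of the form $|x_i|\le T(x_j,z)$ for some threshold $T(x_j,z)\ge 0$. Hence the set of $x_j$ for which $(x_i,x_j,z)\in A$ depends only on $|x_i|$ and can only shrink as $|x_i|$ grows.

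With this in hand, the conclusion will follow quickly. By definition $(x_i,x_j,z)\in C_jA$ iff $x_j\in I(\ell_j(A;(x_i,z)))$, and this is equivalent to $\ell_j(A;(x_i,z))\ge t(x_j)$ for some threshold $t(x_j)\ge 0$ depending only on $|x_j|$. By the symmetry and monotonicity established above, the set of $x_i$ satisfying this is either empty or a centered symmetric integer interval; such an interval necessarily has odd cardinality and therefore takes the form $I(\ell)$ for some odd $\ell\ge 1$. This is precisely the statement that $C_jA$ is $i$-compressed at the fiber over $(x_j,z)$, and since $(x_j,z)$ was arbitrary, $C_jA$ is $i$-compressed.

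I do not foresee any real obstacle; the only minor bookkeeping concerns the parity distinction in the definition of $I(\ell)$, together with the empty-fiber corner case, but neither of these affects the structural flow of the argument.
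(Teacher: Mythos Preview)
Your proof is correct and follows essentially the same approach as the paper's. You isolate the key observation that $\ell_j(A;(x_i,z))$ is symmetric and non-increasing in $|x_i|$ and then read off that the $i$-fibers of $C_jA$ are centered intervals, whereas the paper does the equivalent element-chasing argument (given $(x,y)\in C_2A$, pass down to $A$, slide to $|x'|\le|x|$ using $1$-compression, and return to $C_2A$); the underlying logic is identical.
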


\begin{proof}
Since the compressions $C_i$ and $C_j$ only change the $i$th and the $j$th coordinates and keep the remaining coordinates fixed, we may assume, for notational convenience, that $A\subset\Z^2$ and $(i,j)=(1,2)$. Take any point $(x,y)\in C_2A$. To show that $C_2A$ is $1$-compressed, it suffices to show that $(x',y)\in C_2A$ whenever $|x'|\leq |x|$. Since $C_2A$ is $2$-compressed, we know that $(x,y')\in C_2A$ whenever $|y'|\leq |y|$. Hence $C_2A$ contains at least $2|y|+1$ points in the line $\{(x,t):t\in\Z\}$, and so does $A$. Suppose that $A$ contains the $2|y|+1$ points $(x,t_1),\cdots,(x,t_{2|y|+1})$. Since $A$ is $1$-compressed, it also contains the $2|y|+1$ points $(x',t_1),\cdots,(x',t_{2|y|+1})$. It follows that $C_2A$ contains the point $(x',y)$, as desired.
\end{proof}

The following lemma shows that $i$-compression increases the additive energy. This is the discrete analogue of Lemma \ref{lem:steiner}.

\begin{lemma}\label{lem:compress}
Let $A_1,\cdots,A_k\subset \Z^d$ be as in the statement of
Proposition \ref{prop:Zd}. Then for each $1\leq i\leq d$, we have
$E_k(A_1,\cdots,A_k)\leq E_k(C_i(A_1),\cdots,C_i(A_k))$ and
$|C_i(A_j)|\leq |A_j|+(2N_i+1)^{-1}|P|$ ($1\leq j\leq k$).
\end{lemma}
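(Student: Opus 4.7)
My plan is to mimic the proof of Lemma \ref{lem:steiner}, using Lemma \ref{lem:hlg} (Hardy--Littlewood) as the one-dimensional input in place of Lemma \ref{lem:d=1asym}. First I would use the isomorphism $\sigma_i:\Z^d\to\Z^{d-1}\times\Z$ to split
\[ E_k(A_1,\ldots,A_k)=\sum_{b_1+\cdots+b_k=0}E_k(J_i(A_1;b_1),\ldots,J_i(A_k;b_k)), \]
and similarly for $E_k(C_i(A_1),\ldots,C_i(A_k))$, whose fibers over each $b$ are, by definition, exactly $I(\ell_i(A_j;b))$. Since $C_i$ does not touch the $\Z^{d-1}$ coordinate, the two decompositions range over the same tuples $(b_1,\ldots,b_k)$, so it suffices to prove the fiberwise inequality
\[ E_k(J_i(A_1;b_1),\ldots,J_i(A_k;b_k))\le E_k(I(\ell_i(A_1;b_1)),\ldots,I(\ell_i(A_k;b_k))) \]
for each admissible $(b_1,\ldots,b_k)$ and sum.

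The fiberwise inequality is where I would handle the only real technicality, namely the parity hypothesis in Lemma \ref{lem:hlg}: some slices $J_i(A_j;b_j)$ can have even cardinality, while $I(\ell)$ is built so that $|I(\ell)|$ is always odd, equal to $\ell$ if $\ell$ is odd and to $\ell+1$ if $\ell$ is even. Whenever $\ell_i(A_j;b_j)$ is even I would enlarge that slice by adjoining a single arbitrary integer, producing $\tilde J_j\supset J_i(A_j;b_j)$ with $|\tilde J_j|=|I(\ell_i(A_j;b_j))|$ odd. By monotonicity of $E_k$ in each argument (adding an element to $A_1$ only creates new $k$-tuples summing to zero), $E_k(J_i(A_1;b_1),\ldots)\le E_k(\tilde J_1,\ldots,\tilde J_k)$, and then Lemma \ref{lem:hlg} applied to the $\tilde J_j$ gives exactly the desired bound.

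For the cardinality estimate I would just compute
\[ |C_i(A_j)|=\sum_{b:\ell_i(A_j;b)>0}|I(\ell_i(A_j;b))|=|A_j|+\#\{b\in\Z^{d-1}:\ell_i(A_j;b)\text{ is positive and even}\}, \]
using $\sum_b\ell_i(A_j;b)=|A_j|$ together with the identity $|I(\ell)|=\ell$ or $\ell+1$ according to parity. Since $A_j\subset P$, the nonzero fibers lie over the projection of $P$ to the $d-1$ coordinates other than the $i$th, a set of cardinality $\prod_{\ell\neq i}(2N_\ell+1)=|P|/(2N_i+1)$, yielding $|C_i(A_j)|\le |A_j|+(2N_i+1)^{-1}|P|$. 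The only subtle step in the whole argument is the parity fix in the fiberwise application of Hardy--Littlewood; everything else is a direct transcription of the Steiner symmetrization proof into the discrete setting.
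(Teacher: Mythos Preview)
Your proof is correct and follows essentially the same approach as the paper: decompose $E_k$ along the $i$th coordinate into fiberwise contributions indexed by $(b_1,\ldots,b_k)$ with $b_1+\cdots+b_k=0$, apply Hardy--Littlewood (Lemma~\ref{lem:hlg}) on each fiber, and bound $|C_i(A_j)|-|A_j|$ by the number of nonempty fibers. In fact you are more careful than the paper, which silently applies Lemma~\ref{lem:hlg} without addressing the odd-cardinality hypothesis; your trick of enlarging each even fiber by one point before invoking Hardy--Littlewood is exactly the right way to fill that gap.
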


\begin{proof}
For the upper bound on $|C_i(A_j)|$, note that
\[ |C_i(A_j)|-|A_j|\leq\#\{b\in\Z^{d-1}:\ell_i(A;b)>0\}\leq (2N_i+1)^{-1}|P|. \]
For the increase of $E_k$ after $i$-compression, note that by Lemma
\ref{lem:hlg}, for any $b_1,\cdots,b_k\in\Z^{d-1}$ we have
\[ E_k(J_i(A_1;b_1),\cdots,J_i(A_k;b_k))\leq
E_k(I(\ell_i(A_1;b_1)),\cdots,I(\ell_i(A_k;b_k))). \] Since
$E_k(A_1,\cdots,A_k)$ and $E_k(C_i(A_1),\cdots,C_i(A_k))$ are the
sums of the left side and the right side above, respectively, over
all $b_1,\cdots,b_k$ with $b_1+\cdots+b_k=0$, the desired inequality
follows immediately.
\end{proof}

The following lemma shows that any finite set can be transformed to a down-set by compressions.

\begin{lemma}\label{lem:down}
Let $A_1,\cdots,A_k\subset \Z^d$ be as in the statement of
Proposition \ref{prop:Zd}. Let $A_j'=C_1C_2\cdots C_d(A_j)$ ($1\leq
j\leq k$). Then $A_j'$ is a down-set and $|A_j'|\leq
|A_j|+\vol(\partial P)$. Moreover, $E_k(A_1,\cdots,A_k)\leq
E_k(A_1',\cdots,A_k')$.
\end{lemma}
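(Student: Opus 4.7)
The plan is to apply the compressions one coordinate at a time in the order $C_d, C_{d-1}, \ldots, C_1$, tracking three invariants simultaneously: (a) each intermediate set remains a subset of the box $P$, (b) the cardinality has not grown by more than the promised amount, and (c) the quantity $E_k$ has not decreased.

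First I would verify invariant (a), which is not stated explicitly earlier but is needed in order to apply Lemma \ref{lem:compress} with the correct side lengths at each step. If $A \subset P$, then every fiber $J_i(A;b)$ has at most $2N_i+1$ elements, so the symmetric interval $I(\ell_i(A;b))$ has length at most $2N_i+1$ and is therefore contained in $[-N_i, N_i]$; hence $C_i(A) \subset P$. With this invariant in hand, Lemma \ref{lem:compress} applies unchanged at each of the $d$ stages: at stage $i$ the size of each $A_j$ grows by at most $(2N_i+1)^{-1}|P|$, and $E_k(A_1, \ldots, A_k)$ does not decrease. Summing the cardinality increments over $i = 1, \ldots, d$ yields
\[ |A_j'| \leq |A_j| + \sum_{i=1}^d (2N_i+1)^{-1}|P| = |A_j| + \vol(\partial P), \]
and chaining the step-by-step $E_k$ inequalities gives $E_k(A_1, \ldots, A_k) \leq E_k(A_1', \ldots, A_k')$.

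To show that $A_j'$ is a down-set I would argue inductively using Lemma \ref{lem:onceforall}. After the first step, $C_d(A_j)$ is $d$-compressed. Applying $C_{d-1}$ next, Lemma \ref{lem:onceforall} guarantees that the result remains $d$-compressed while also becoming $(d-1)$-compressed. Iterating, after the application of $C_i$ the current set is $m$-compressed for every $i \leq m \leq d$. Setting $i = 1$ shows that $A_j'$ is $i$-compressed for every $1 \leq i \leq d$, i.e.\ a down-set.

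The only conceptual point that requires care is invariant (a): the size bound in Lemma \ref{lem:compress} is stated in terms of the fixed $N_i$ of $P$, so one must check that the intermediate compressed sets do not escape $P$, which is what allows reusing the same $N_i$ throughout. Everything else is direct chaining of Lemmas \ref{lem:onceforall} and \ref{lem:compress}, and I do not anticipate any real obstacle beyond this bookkeeping.
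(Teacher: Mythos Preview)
Your proof is correct and follows essentially the same route as the paper's: apply Lemma \ref{lem:compress} $d$ times for the size and $E_k$ bounds, and iterate Lemma \ref{lem:onceforall} for the down-set property. Your explicit verification of invariant (a)---that each intermediate compressed set remains inside $P$---is a point the paper's proof tacitly assumes but does not spell out, so your version is in fact slightly more careful.
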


\begin{proof}
The inequalities $|A_j'|\leq |A_j|+\vol(\partial P)$ and
$E_k(A_1,\cdots,A_k)\leq E_k(A_1',\cdots,A_k')$ follow from applying
Lemma \ref{lem:compress} $d$ times. To show that $A_j'$ is a
down-set, it suffices to show that $A_j'$ is $i$-compressed for each
$1\leq i\leq d$. This follows by applying Lemma \ref{lem:onceforall}
to the $i$-compressed set $C_iC_{i+1}\cdots C_d(A_j)$.
\end{proof}

For finite subsets $A_1,\cdots A_k\subset\Z^d$ ($k\geq 3$) and
$s\in\Z^d$, let $S_k(A_1,\cdots,A_k;s)$ be the number of solutions
to $a_1+\cdots+a_k=s$ with $a_i\in A_i$. In particular
$S_k(A_1,\cdots,A_k;0)=E_k(A_1,\cdots,A_k)$. The following lemma
shows that, if at least one of the sets $A_1,\cdots,A_k$ is a
down-set, then $S_k(A_1,\cdots,A_k;s)$ is close to
$E_k(A_1,\cdots,A_k)$ as long as the coordinates of $s$ are small.

\begin{lemma}\label{lem:boundary}
Let $A_1,\cdots,A_k\subset \Z^d$ be as in the statement of
Proposition \ref{prop:Zd}. Assume that $A_k$ is a down-set. Then for
any $s\in\Z^d$ we have $S_k(A_1,\cdots,A_k;s)\geq
E_k(A_1,\cdots,A_k)-O_s(|A_1|\cdots |A_{k-2}|\vol(\partial P))$.
\end{lemma}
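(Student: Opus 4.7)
The plan is to expand both $E_k(A_1,\ldots,A_k)$ and $S_k(A_1,\ldots,A_k;s)$ through the convolution count $r(y) = \#\{(a_1,\ldots,a_{k-1}) \in A_1 \times \cdots \times A_{k-1} : a_1 + \cdots + a_{k-1} = y\}$. Solving for $a_k$ in each case gives $E_k(A_1,\ldots,A_k) = \sum_y r(y) \mathbf{1}_{A_k}(-y)$ and $S_k(A_1,\ldots,A_k;s) = \sum_y r(y) \mathbf{1}_{A_k}(s-y)$, so after substituting $a = -y$,
\[ E_k(A_1,\ldots,A_k) - S_k(A_1,\ldots,A_k;s) = \sum_a r(-a) \bigl(\mathbf{1}_{A_k}(a) - \mathbf{1}_{A_k}(a+s)\bigr). \]
Discarding the negative contributions and using the trivial bound $r(-a) \leq |A_1|\cdots|A_{k-2}|$ (fix $a_1,\ldots,a_{k-2}$ in $A_1 \times \cdots \times A_{k-2}$, after which $a_{k-1}$ is determined) reduces the lemma to the purely geometric estimate
\[ \#\{a \in A_k : a+s \notin A_k\} = O_s\bigl(\vol(\partial P)\bigr). \]

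For this boundary count I would exploit the down-set structure of $A_k$ by a telescoping argument in the coordinate directions. Letting $\vect{e}_j$ denote the $j$-th standard basis vector in $\Z^d$, write $s = s_1\vect{e}_1 + \cdots + s_d\vect{e}_d$ and set $t_j = s_1\vect{e}_1 + \cdots + s_j\vect{e}_j$. If $a \in A_k$ but $a + s \notin A_k$, then some least index $j$ satisfies $a + t_{j-1} \in A_k$ and $a + t_j \notin A_k$; the substitution $b = a + t_{j-1}$ bijects the subset obtained for that $j$ with a subset of $\{b \in A_k : b + s_j\vect{e}_j \notin A_k\}$, giving
\[ \#\{a \in A_k : a+s \notin A_k\} \leq \sum_{j=1}^d \#\{b \in A_k : b + s_j\vect{e}_j \notin A_k\}. \]

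For each $j$ I would then slice $A_k$ along direction $j$, exactly as in the proof of Lemma \ref{lem:compress}. Since $A_k$ is $j$-compressed and contained in $P$, each nonempty slice $J_j(A_k;b')$ is a symmetric interval $[-m,m]\cap\Z$ with $m \leq N_j$; within each such slice at most $|s_j|$ of the $j$-th coordinates can be shifted out of the slice by adding $s_j$. The number of nonempty slices is at most $|P|/(2N_j+1)$ (the size of the projection of $P$ onto the remaining $d-1$ coordinates), yielding $\#\{b \in A_k : b + s_j\vect{e}_j \notin A_k\} \leq |s_j|\cdot |P|/(2N_j+1)$. Summing over $j$ gives $\sum_j |s_j|\cdot|P|/(2N_j+1) \leq \|s\|_\infty \vol(\partial P) = O_s(\vol(\partial P))$, as needed.

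The only delicate step is the coordinate-by-coordinate telescoping: the $i$-compression hypothesis gives good control only for single-coordinate shifts, so a general $s$ must be decomposed one coordinate at a time, at the cost of a factor of $d$ and a factor of $\|s\|_\infty$. After that reduction the slice-by-slice estimate is essentially the same calculation used to prove the bound $|C_i(A_j)| \leq |A_j| + (2N_i+1)^{-1}|P|$ in Lemma \ref{lem:compress}, so no further difficulty is expected.
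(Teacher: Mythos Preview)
Your proof is correct. Both arguments begin identically: reduce to the bound $|A_k\setminus(A_k-s)|=O_s(\vol(\partial P))$ via the trivial estimate $E_k(A_1,\ldots,A_{k-1},A_k'')\leq |A_1|\cdots|A_{k-2}|\,|A_k''|$, where $A_k''=A_k\setminus(A_k-s)$. The difference lies in how this geometric boundary estimate is carried out.

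The paper fixes one coordinate direction and writes $A_k=\bigcup_b[-U(b),U(b)]\times\{b\}$ for a height function $U:\Z^{d-1}\to\Z$; it then shows (using the full down-set hypothesis) that $U$ is symmetric and coordinatewise monotone on the positive orthant, and bounds $|A_k''|$ by a telescoping sum of the form $\sum_b(U(b)-U(b+|t|))$ over the base variable $b$. Your argument instead telescopes in the \emph{shift} variable, decomposing $s$ into single-coordinate increments $s_j\vect e_j$ and invoking only the $j$-compressed property slice by slice. This avoids extracting the monotonicity of $U$ and yields the clean bound $\sum_j |s_j|\,|P|/(2N_j+1)\leq \|s\|_\infty\,\vol(\partial P)$ directly. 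Your route is somewhat more elementary and makes the dependence on $s$ explicit; the paper's route, on the other hand, records the useful structural fact that a down-set is the subgraph of a monotone height function, which can be handy in related arguments.
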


\begin{proof}
Let $A_k'=A_k\cap (A_k-s)$ and $A_k''=A_k\setminus (A_k-s)$. Then
\begin{align*}
S_k(A_1,\cdots,A_k;s) & =E_k(A_1,\cdots,A_{k-1},A_k-s) \\
& \geq
E_k(A_1,\cdots,A_{k-1},A_k')=E_k(A_1,\cdots,A_{k-1},A_k)-E_k(A_1,\cdots,A_{k-1},A_k'')
\\
&\geq E_k(A_1,\cdots,A_k)-|A_k''||A_1|\cdots |A_{k-2}|.
\end{align*}
It thus suffices to show that $|A_k''|=O_s(\lambda(\partial P))$.

Since $A_k$ is $1$-compressed, it is of the form
\[ A_k=\bigcup_{b\in\Z^{d-1}}[-U(b),U(b)]\times\{b\} \]
for some $U:\Z^{d-1}\rightarrow\Z$. Since $A_k$ is a down-set, in
particular it is symmetric with respect to each coordinate plane.
Hence the function $U$ satisfies the relation
$U(x_2,\cdots,x_d)=U(|x_2|,\cdots,|x_d|)$.

Moreover, we claim that $U$ is decreasing in each coordinate in the
region $x_2,\cdots,x_d\geq 0$. In fact, if we have, for example,
$u_y=U(y,x_3,\cdots,x_d)<U(z,x_3,\cdots,x_d)=u_z$ for some
$x_3,\cdots,x_d\geq 0$ and $0\leq y\leq z$, then
\[ (u_z,y,x_3,\cdots,x_d)\notin A_k,\ \
(u_z,z,x_3,\cdots,x_d)\in A_k. \] This contradicts the fact that
$A_k$ is $2$-compressed.

To complete the proof, write $s=(u,t)$ with $u\in\Z$ and
$t\in\Z^{d-1}$. Then
\[ A_k''=\bigcup_{b\in\Z^{d-1}}([-U(b),U(b)]\setminus
[-U(b+t)-u,U(b+t)-u])\times\{b\}. \] Hence by the symmetry and the
monotonicity of $U$, we have
\[ |A_k''|\leq\sum_{\substack{b\in\Z^{d-1}\\
U(b)\geq 0}}[u+2(U(|b|)-U(|b|+|t|))]\ll
u(2N_1+1)^{-1}|P|+\sum_{\substack{b\in\Z^{d-1}\\
b\geq 0}}(U(b)-U(b+|t|)),
\]
where $b\geq 0$ means that $b$ has nonnegative coordinates, and
$|t|$ is the vector obtained by taking the absolute value of each
coordinate of $t$. For the telescoping sum on the right above, the
term $U(b)$ appears only when $b<|t|$. The number of such $b$ is
\[ \ll_{t}(2N_2+1)\cdots (2N_d+1)\sum_{i=2}^{d}(2N_i+1)^{-1}.
\]
Combining this with the trivial upper bound $U(b)\leq N_1$, we get
$|A_k''|\ll_s\vol(\partial P)$. This completes the proof.
\end{proof}

\begin{proof}[Proof of Proposition \ref{prop:Zd}]

We may assume that $|A_1|\geq\vol(\partial P)$; otherwise there is nothing to prove.
By Lemma \ref{lem:down} there exist down-sets $A_1',\cdots,A_k'$
such that
\[ E_k(A_1,\cdots,A_k)\leq E_k(A_1',\cdots,A_k'),\ \
|A_j'|\leq |A_j|+\vol(\partial P). \] For each $1\leq j\leq k$,
consider the compact set $K_j$ obtained by taking the union of
$R(a)$ ($a\in A_j'$), where $R(a)$ is the unit box centered around
$a$ (if $a=(a^{(1)},\cdots,a^{(d)})$ then
$R(a)=[a^{(1)}-1/2,a^{(1)}+1/2]\times\cdots\times
[a^{(d)}-1/2,a^{(d)}+1/2]$). Clearly $K_j$ has volume equal to
$|A_j'|$, and thus by Proposition \ref{prop:Rd},
\begin{equation}\label{Zd1}
E_k(K_1,\cdots,K_k)\leq E_k(B_1',\cdots,B_k'),
\end{equation}
 where $B_j'$ is the
closed ball centered at the origin with
$\vol(B_j')=\vol(K_j)=|A_j'|$. Since
$\vol(B_j')\leq\vol(B_j)+\lambda(\partial P)$, we have
$\vol(B_j')\leq 2\vol(B_j)$ and
\begin{equation}\label{Zd2}
E_k(B_1',\cdots,B_k')\leq E_k(B_1,\cdots,B_k)+O(|A_2|\cdots
|A_{k-1}|\vol(\partial P)).
\end{equation}

To relate $E_k(K_1,\cdots,K_k)$ with $E_k(A_1,\cdots,A_k)$, note that
\[ E_k(K_1,\cdots,K_k)=\sum_{a_1,\cdots,a_k\in A}E_k(R(a_1),\cdots,R(a_k)). \]
Since
$E_k(R(a_1),\cdots,R(a_k))=E_k(R(a_1+\cdots+a_k),R(0),\cdots,R(0))$,
we have
\[ E_k(K_1,\cdots,K_k)=\sum_{s\in\Z^d}E_k(R(s),R(0),\cdots,R(0))\cdot S_k(A_1,\cdots,A_k;s). \]
Note that $E(R(s),R(0),\cdots,R(0))$ vanishes unless the coordinates of $s$ are all bounded by $k$. For such $s\in\Z^d$, we have by Lemma \ref{lem:boundary},
\[ S_k(A_1,\cdots,A_k;s)\geq E_k(A_1,\cdots,A_k)-O(|A_1|\cdots |A_{k-2}|\vol(\partial P)). \]
Since $\sum_s E(R(s),R(0),\cdots,R(0))=1$, we conclude that
\[ E_k(K_1,\cdots,K_k)\geq E_k(A_1,\cdots,A_k)-O(|A_1|\cdots |A_{k-2}|\vol(\partial P)). \]
The proof is completed by combining this with \eqref{Zd1} and
\eqref{Zd2}.

\end{proof}

%%%%%%%%%%%%%%%%%%%%%%%%%%%%%%%%%%%%%%%%%%%%%%%%%%%%%%%%%%%%%%%%%%%%%%%%%%
%%%%%%%%%%%%%%%%%%%%%%%%%%%%%% Applications %%%%%%%%%%%%%%%%%%%%%%%%%%%%%%
%%%%%%%%%%%%%%%%%%%%%%%%%%%%%%%%%%%%%%%%%%%%%%%%%%%%%%%%%%%%%%%%%%%%%%%%%%

\section{Proof of Theorem \ref{thm:bsgv}}

\subsection{A structure decomposition for additive sets}

A technical ingredient in the proof of Theorem \ref{thm:bsgv} is to
decompose an arbitrary set $X$ into some ``additively structured''
parts $X_1,\cdots,X_m$ with small doubling plus a leftover part
$X_0$ in such a way that there is little ``additive communication''
between $X_i$ and $X_j$ for distinct $i$ and $j$. As a consequence,
most of solutions to additive equations in $X$ occur inside $X_i$
for some $1\leq i\leq m$.

\begin{proposition}[Structure theorem, Proposition 3.2 of \cite{green-sisask}]\label{prop:structure}
Let $G$ be an abelian group and $X\subset G$ be a finite subset. Let $\eta,\eta'>0$ be parameters. Then there is a decomposition of $X$ as a disjoint union $X_1\cup\cdots\cup X_m\cup X_0$ such that
\begin{enumerate}
\item (Components are large). $|X_i|\gg_{\eta}|X|$ for each $1\leq i\leq m$;
\item (Components are structured). $|X_i+X_i|\ll_{\eta,\eta'}|X_i|$ for each $1\leq i\leq m$;
\item (Distinct components do not communicate). $E_4(X_i,X_j,-X_i,-X_j)\leq\eta' |X_i|^{3/2}|X_j|^{3/2}$ whenever $1\leq i<j\leq m$;
\item (Noise term). $E_4(X_0,X,-X_0,-X)\leq\eta |X|^3$.
\end{enumerate}
\end{proposition}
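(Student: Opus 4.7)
The plan is to prove this by an iterative extraction based on the Balog-Szemer\'edi-Gowers (BSG) theorem, followed by a decoupling cleanup that enforces condition (3). I will build the $X_i$ greedily, peeling off a large structured subset from a residual set at each stage, and then merge any pairs of components that still communicate too much.

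First I would initialize the residual $Y := X$ with an empty list of components. At each stage, test whether $E_4(Y, X, -Y, -X) \leq \eta |X|^3$; if so, stop and set $X_0 := Y$, which secures (4) immediately. Otherwise, a dyadic pigeonhole on the representation count $r(s) := \#\{(y,x) \in Y \times X : y - x = s\}$, combined with Cauchy-Schwarz, yields a subset $Y' \subset Y$ with $|Y'| \gg_\eta |X|$ on which the symmetric additive energy satisfies $E_4(Y', Y', -Y', -Y') \gg_\eta |Y'|^3$. The standard BSG theorem then produces $Z \subset Y'$ with $|Z| \gg_\eta |Y'|$ and $|Z + Z| \ll_\eta |Z|$; I append $Z$ as the next component, delete it from $Y$, and repeat. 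Since the extracted components are pairwise disjoint subsets of $X$ each of size $\gg_\eta |X|$, the procedure halts after $m \ll_\eta 1$ stages, securing (1), (2), and (4).

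For (3), I would perform a merging post-processing. While there exist distinct components $X_i, X_j$ with $E_4(X_i, X_j, -X_i, -X_j) > \eta' |X_i|^{3/2} |X_j|^{3/2}$, apply the asymmetric BSG theorem to extract $X_i' \subset X_i$ and $X_j' \subset X_j$ of proportional densities with small relative sumset $|X_i' + X_j'| \ll_{\eta'} \max(|X_i'|, |X_j'|)$. Combined with the small doublings of $X_i$ and $X_j$ already in hand from the extraction phase, the Pl\"unnecke-Ruzsa inequality gives that $X_i' \cup X_j'$ has doubling depending only on $\eta$ and $\eta'$, so replace the pair by this merged component and sweep the discarded remnants into $X_0$. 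Since the component count strictly decreases with each merge and started at $O_\eta(1)$, the merging phase halts in $O_\eta(1)$ rounds.

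The hard part will be coordinating the two phases. Merging moves mass into $X_0$, which can violate (4); rerunning the extraction on the enlarged residual can then spawn new pairs violating (3). The natural fix is to interleave extraction and merging until neither stopping criterion is triggered: the alternation terminates because every event either creates a component (extraction) or destroys one (merge), both controlled by the uniform bound $m \ll_\eta 1$, so only finitely many alternations occur. Verifying that the final configuration simultaneously satisfies all four conditions with the claimed dependencies on $\eta$ and $\eta'$---and in particular tracking those dependencies carefully through the Pl\"unnecke-Ruzsa step, which is where the $\eta'$-dependence in (2) really enters---is the delicate quantitative core of the argument.
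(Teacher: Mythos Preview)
The paper does not prove this proposition; it is quoted verbatim from Green--Sisask and used as a black box, so there is no in-paper argument to compare against. That said, your extraction phase is the standard approach and is sound: from $E_4(Y,X,-Y,-X)>\eta|X|^3$ one has, by Cauchy--Schwarz on $E_4(Y,X,-Y,-X)=\sum_s r_{Y-Y}(s)r_{X-X}(s)$, both $|Y|\gg_\eta|X|$ and $E_4(Y,Y,-Y,-Y)\gg_\eta|Y|^3$, so BSG applies directly to $Y$ without any preliminary pigeonholing.

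The merging phase, however, has a real gap. By keeping only the BSG pieces $X_i'\subset X_i$, $X_j'\subset X_j$ and sweeping the remnants into $X_0$, you may move a constant proportion of $|X|$ into $X_0$ at each merge (the BSG density $c_{\eta'}$ can be tiny), destroying (4). Your termination claim then fails: the bound $m\ll_\eta 1$ caps the number of components \emph{present at any moment}, but gives no monovariant preventing an unbounded alternation of extractions and merges.

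The fix is to merge the \emph{entire} components, with no remnants. If $|X_i+X_i|\leq K|X_i|$, $|X_j+X_j|\leq K|X_j|$ and $E_4(X_i,X_j,-X_i,-X_j)>\eta'|X_i|^{3/2}|X_j|^{3/2}$, choose $g$ with $|X_i\cap(g-X_j)|\geq\eta'|X_i|^{1/2}|X_j|^{1/2}$ and apply the Ruzsa triangle inequality with $A=X_i\cap(g-X_j)$ to get
\[
|X_i+X_j|\leq\frac{|A+X_i|\,|A-X_j|}{|A|}\leq\frac{K^2|X_i||X_j|}{\eta'|X_i|^{1/2}|X_j|^{1/2}}=\frac{K^2}{\eta'}|X_i|^{1/2}|X_j|^{1/2}.
\]
Since $|A|\leq\min(|X_i|,|X_j|)$ forces $|X_i|\asymp_{\eta'}|X_j|$, this shows $X_i\cup X_j$ itself has doubling $\ll_{K,\eta'}1$. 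Replacing the pair by their full union leaves $X_0$ untouched, so (4) persists, the component count strictly decreases, and after at most $m-1=O_\eta(1)$ merges the accumulated doubling constants remain $O_{\eta,\eta'}(1)$---precisely the dependence (2) allows. With this modification your two phases run sequentially with no interleaving needed.
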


We remark that in the bound $|X_i|\gg_{\eta}|X|$ the implied
constant depends only on $\eta$ but not $\eta'$. In particular, this
implies that the number of parts $m=O_{\eta}(1)$.

The following lemma captures the idea that $E_k(Y_1,\cdots,Y_k)$ is
small if two of these sets have little additive communication.

\begin{lemma}\label{lem:small}
Let $G$ be an abelian group and $Y_1,\cdots,Y_k$ ($k\geq 3$) be
finite subsets. Then
\[ E_k(Y_1,\cdots,Y_k)\leq |Y_4|\cdots
|Y_k||Y_3|^{1/2}E_4(Y_1,Y_2,-Y_1,-Y_2)^{1/2}. \]
\end{lemma}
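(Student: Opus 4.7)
The plan is a straightforward Cauchy--Schwarz decoupling of the $k$-fold convolution into a $2$-fold piece (which gives the $E_4$ factor) and a $(k-2)$-fold piece (which gives the size factors). Concretely, for each $s\in G$ define the representation functions
\[ f(s) = \#\{(y_1,y_2)\in Y_1\times Y_2 : y_1+y_2=s\}, \quad g(s) = \#\{(y_3,\ldots,y_k)\in Y_3\times\cdots\times Y_k : y_3+\cdots+y_k=s\}. \]
Then by definition $E_k(Y_1,\ldots,Y_k) = \sum_{s} f(s)\,g(-s)$.

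First I would apply Cauchy--Schwarz to get
\[ E_k(Y_1,\ldots,Y_k) \leq \Bigl(\sum_s f(s)^2\Bigr)^{1/2}\Bigl(\sum_s g(-s)^2\Bigr)^{1/2}. \]
The first sum is exactly the number of quadruples $(y_1,y_2,y_1',y_2')\in Y_1\times Y_2\times Y_1\times Y_2$ with $y_1+y_2=y_1'+y_2'$, which is $E_4(Y_1,Y_2,-Y_1,-Y_2)$. So it remains to bound $\sum_s g(s)^2$.

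For that, I would argue by a direct counting: $\sum_s g(s)^2$ counts tuples $(y_3,\ldots,y_k,y_3',\ldots,y_k')\in (Y_3\times\cdots\times Y_k)^2$ with $y_3+\cdots+y_k=y_3'+\cdots+y_k'$. Choose $y_4,\ldots,y_k$ and $y_4',\ldots,y_k'$ freely (contributing $(|Y_4|\cdots|Y_k|)^2$) and then $y_3$ freely (contributing $|Y_3|$); the constraint then pins down $y_3'$ uniquely. This yields $\sum_s g(s)^2 \leq |Y_3|\,(|Y_4|\cdots|Y_k|)^2$. Substituting both bounds into the Cauchy--Schwarz inequality gives the claimed estimate.

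There is no real obstacle here: the only choices are which two of the $k$ sets to isolate in the $E_4$ factor (the statement picks $Y_1,Y_2$) and which factor to take the extra $|Y_3|^{1/2}$ loss on, both dictated by the form of the conclusion. The argument is entirely elementary and does not use any of the Steiner/compression machinery developed earlier in the paper.
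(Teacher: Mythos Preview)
Your argument is correct and essentially the same as the paper's: both isolate the $(Y_1,Y_2)$ convolution via a single Cauchy--Schwarz step, differing only in bookkeeping (the paper fixes $y_4,\ldots,y_k$ and applies Cauchy--Schwarz over $y_3\in Y_3$ to $\sum_{y_3} r(g-y_3)$, whereas you apply it globally over $s$ to $\sum_s f(s)g(-s)$).
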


\begin{proof}
For any $h\in G$, let $r(h)$ be the number of ways to write
$h=y_1+y_2$ with $y_1\in Y_1$ and $y_2\in Y_2$. We first bound the
number of solutions to $y_1+y_2+y_3=g$ with $y_i\in Y_i$ for any
fixed $g\in G$. This number is
\begin{align*}
\sum_{y_3\in Y_3}r(g-y_3) &\leq |Y_3|^{1/2}\left(\sum_{y_3\in
Y_3}r(g-y_3)^2\right)^{1/2} \\
&\leq |Y_3|^{1/2}\left(\sum_{h\in
G}r(h)^2\right)^{1/2}=|Y_3|^{1/2}E_4(Y_1,Y_2,-Y_1,-Y_2)^{1/2}.
\end{align*}
Now, to count the number of solutions to $y_1+\cdots+y_k=0$ with
$y_i\in Y_i$, first fix $y_4,\cdots,y_k$ and then count the number
of solutions to $y_1+y_2+y_3=g$ with $g=-y_4-\cdots-y_k$. In this
way we obtain the bound
\[ E_k(Y_1,\cdots,Y_k)\leq |Y_4|\cdots
|Y_k||Y_3|^{1/2}E_4(Y_1,Y_2,-Y_1,-Y_2)^{1/2}. \]

\end{proof}

\subsection{Proof of Theorem \ref{thm:bsgv} in the small doubling
case}\label{sec:smalldoubling}

Recall that $X=A_1\cup\cdots\cup A_k$. First consider the situation
when $|X+X|\ll_{\epsilon}|X|$. By Freiman's theorem, $X$ lies in a
proper progression $Q'$ of rank $r\ll_{\epsilon}1$ and size
$|Q'|\ll_{\epsilon}|X|$. Using Theorem 2.1 of \cite{green-proper}
and enlarging $Q'$ if necessary, we may further assume that $Q'$ is
centered, meaning that it takes the form
\[ Q'=\{x_1n_1+\cdots+x_rn_r:n_i\in\Z,|n_i|\leq N_i\}, \]
and that $Q'$ is $k$-proper, meaning that
\[ kQ'=\{x_1n_1+\cdots+x_rn_r:n_i\in\Z,|n_i|\leq kN_i\}
\]
is again a proper progression. Assume also that $r\geq d+1$, since
otherwise we are already done by taking $Q=Q'$.

Consider the map $\sigma:Q'\rightarrow\Z^r$ defined by
\[ \sigma(x_1n_1+\cdots+x_rn_r)=(n_1,\cdots,n_r). \]
Since $Q'$ is $k$-proper, this is well-defined and bijective, and
moreover it is a Freiman $k$-isomorphism, meaning that for any
$q_1,\cdots,q_k,q_1',\cdots,q_k'\in Q'$, we have
\[ q_1+\cdots+q_k=q_1'+\cdots+q_k'\Longleftrightarrow
\sigma(q_1)+\cdots+\sigma(q_k)=\sigma(q_1')+\cdots+\sigma(q_k'). \]
In particular, $q_1+\cdots+q_k=0$ if and only if
$\sigma(q_1)+\cdots+\sigma(q_k)=0$. Hence $E_k$ is stable under
$\sigma$.

Before applying Proposition \ref{prop:Zd}, we make a simple
reduction to the case $r=d+1$. Without loss of generality assume
that $N_1\geq\cdots\geq N_r$. Note that the image $\sigma(Q')$ lies
inside the box $([-N_1,N_1]\times\cdots\times [-N_r,N_r])\cap\Z^r$,
and clearly there is a Freiman $k$-isomorphism
\[ \tau:([-N_1,N_1]\times\cdots\times [-N_r,N_r])\cap\Z^r\rightarrow
P, \] where $P$ is a box $[-N_1,N_1]\times\cdots [-N_d,N_d]\times
[-N_{d+1}',N_{d+1}']$ in $\Z^{d+1}$, with $N_{d+1}'\ll N_{d+1}\cdots
N_r$. Now consider the image of $A_i\subset X\subset Q'$ under the
Freiman $k$-isomorphism $\tau\circ\sigma$ and apply Proposition
\ref{prop:Zd}:
\[ E_k(A_1,\cdots,A_k)=E_k(\tau(\sigma(A_1)),\cdots,\tau(\sigma(A_k)))\leq
E_k(B_1,\cdots,B_k)+O(|A_2|\cdots |A_{k-1}|\vol(\partial P)),
\] where $B_i\subset\R^{d+1}$ is the closed ball centered at the
origin with $\vol(B_i)=|A_i|$.

Combining this with the hypothesis
\[ E_k(A_1,\cdots,A_k)\geq E_k(B_1,\cdots,B_k)+\epsilon |X|^{k-1},
\]
we conclude that $\vol(\partial P)\gg |X|$ and thus $\vol(\partial
P)\gg_{\epsilon}|P|$ (because $|P|\ll |Q'|\ll_{\epsilon}|X|$). Hence
at least one of the side lengths of $P$ is $O_{\epsilon}(1)$. It
follows that either $N_i\ll_{\epsilon}1$ for some $1\leq i\leq d$ or
$N_{d+1}'\ll_{\epsilon}1$. Recall that the $N_i$'s are arranged in
decreasing order and $r\ll_{\epsilon}1$. Hence in either case we
have $N_{d+1}\cdots N_r\ll_{\epsilon}1$.

To finish the proof (in the case when $X$ has small doubling), note
that $Q'$ is the union of $O(N_{d+1}\cdots N_r)=O_{\epsilon}(1)$
copies of proper progressions of rank at most $d$. Hence one of
those progressions $Q$ satisfies $|Q\cap X|\gg_{\epsilon}|X|$, as
desired.

\subsection{Proof of Theorem \ref{thm:bsgv} in the general case}

Now consider the general case when $X$ does not necessarily have
small doubling. Let $\eta=\eta(\epsilon)>0$ be a small parameter and
$\eta'=\eta'(\epsilon,\eta)>0$ be a smaller parameter. Apply
Proposition \ref{prop:structure} to the set $X$ with these
parameters to obtain a decomposition $X=X_1\cup\cdots X_m\cup X_0$
satisfying the listed properties. In particular, for $1\leq j\leq m$
we have $|X_j+X_j|\ll_{\eta,\eta'}|X_j|$ and $|X_j|\gg_{\eta}|X|$.
Moreover $m=O_{\eta}(1)$.

For $1\leq i\leq k$ and $0\leq j\leq m$, let $A_{ij}=A_i\cap X_j$.
Note that
\[ E_k(A_1,\cdots A_k)=\sum_{0\leq j_1,\cdots,j_k\leq m}E_k(A_{1j_1},\cdots,A_{kj_k}). \]
Split this sum into three parts $S_1,S_2,S_3$. Here $S_1$ is the contribution from the terms $j_1=\cdots=j_k>0$, $S_2$ is the contribution from the terms where one of $j_1,\cdots,j_k$ is zero, and $S_3$ is the contribution from the remaining terms, with $j_1,\cdots,j_k$ all positive but not all equal.

We show using Lemma \ref{lem:small} that the contributions from
$S_2$ and $S_3$ are negligible. For $S_2$, we have
\[ S_2\leq kE_k(X_0,X,\cdots,X)\ll
E_4(X_0,X,-X_0,-X)^{1/2}|X|^{k-5/2}\leq\eta^{1/2}|X|^{k-1}\leq\frac{1}{3}\epsilon
|X|^{k-1}, \] provided that $\eta$ is small enough depending on
$\epsilon$. For $S_3$, note that each summand appearing in $S_3$ is
bounded by $\eta'^{1/2}|X|^{k-1}$, and thus
\[ S_3\leq m^k\eta'^{1/2}|X|^{k-1}\leq\frac{1}{3}\epsilon |X|^{k-1}, \]
provided that $\eta'$ is small enough depending on $\eta$ and
$\epsilon$ (recall that $m=O_{\eta}(1)$).

It follows that
\[ S_1=\sum_{j=1}^mE_k(A_{1j},\cdots,A_{kj})\geq E_k(A_1,\cdots,A_k)-\frac{2}{3}\epsilon |X|^{k-1}\geq E_k(B_1,\cdots,B_k)+\frac{1}{3}\epsilon |X|^{k-1}. \]
We need the following simple lemma.

\begin{lemma}
Let $C_i,D_i\subset\R^d$  ($1\leq i\leq k$) be compact subsets. Let
$B_i\subset\R^d$ be the closed ball centered at the origin with
$\vol(B_i)=\vol(C_i)+\vol(D_i)$. Then
\[ E_k(B_1,\cdots,B_k)\geq E_k(C_1,\cdots,C_k)+E_k(D_1,\cdots,D_k). \]
\end{lemma}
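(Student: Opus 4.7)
The plan is to reduce the inequality to Proposition \ref{prop:Rd} by combining each pair $C_i, D_i$ into a single compact set of volume $\vol(B_i)$, after first translating the $D_i$'s away from the $C_i$'s in a way that preserves both volume and $E_k$.

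First I would pick translation vectors $T_1, \ldots, T_k \in \R^d$ satisfying $T_1 + \cdots + T_k = 0$ and $(D_i + T_i) \cap C_i = \emptyset$ for every $i$. This is easy to arrange since all the sets are bounded: fix any nonzero $v \in \R^d$ and set $T_1 = (k-1)Rv$, $T_j = -Rv$ for $j \geq 2$, with $R$ sufficiently large. Writing $D_i' = D_i + T_i$ and making the change of variables $a_i \mapsto a_i + T_i$ in the integral defining $E_k$, the condition $\sum T_i = 0$ gives $E_k(D_1', \ldots, D_k') = E_k(D_1, \ldots, D_k)$.

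Next, set $F_i = C_i \cup D_i'$. Disjointness of $C_i$ and $D_i'$ yields $\vol(F_i) = \vol(C_i) + \vol(D_i) = \vol(B_i)$. Splitting the integral defining $E_k(F_1, \ldots, F_k)$ according to whether each $a_i$ lies in $C_i$ or in $D_i'$ expresses this quantity as a sum of $2^k$ nonnegative terms of the form $E_k(E_1^S, \ldots, E_k^S)$ indexed by subsets $S \subset \{1, \ldots, k\}$, with $E_i^S = C_i$ for $i \in S$ and $E_i^S = D_i'$ for $i \notin S$. Two of these terms are exactly $E_k(C_1, \ldots, C_k)$ and $E_k(D_1', \ldots, D_k') = E_k(D_1, \ldots, D_k)$; discarding the other $2^k - 2$ nonnegative terms gives
\[ E_k(F_1, \ldots, F_k) \geq E_k(C_1, \ldots, C_k) + E_k(D_1, \ldots, D_k). \]

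Finally, applying Proposition \ref{prop:Rd} to $F_1, \ldots, F_k$, whose volumes agree with those of $B_1, \ldots, B_k$, yields $E_k(F_1, \ldots, F_k) \leq E_k(B_1, \ldots, B_k)$, and chaining the two inequalities finishes the argument. There is no serious obstacle here; the only ingredient beyond Proposition \ref{prop:Rd} is the translation invariance of $E_k$ under translations summing to zero, which is just a change of variables in the defining integral.
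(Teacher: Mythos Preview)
Your proof is correct and is essentially the same as the paper's: translate the $D_i$'s by vectors summing to zero to make them disjoint from the $C_i$'s, form the unions, use the trivial lower bound $E_k(F_1,\ldots,F_k)\geq E_k(C_1,\ldots,C_k)+E_k(D_1,\ldots,D_k)$, and then apply Proposition~\ref{prop:Rd}. Your write-up is in fact slightly more explicit than the paper's (you construct the $T_i$'s concretely and spell out the $2^k$-term expansion), and you correctly invoke Proposition~\ref{prop:Rd} where the paper cites Lemma~\ref{lem:d=1asym}, which appears to be a typo since the lemma is stated for general $d$.
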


\begin{proof}
Choose $t_1,\cdots,t_k\in\R^d$ with $t_1+\cdots+t_k=0$ such that
$C_i\cap (D_i+t_i)=\emptyset$ for each $1\leq i\leq k$. Note that
\begin{equation}\label{simple1}
E_k(D_1,\cdots,D_k)=E_k(D_1+t_1,\cdots,D_k+t_k).
\end{equation}
 By the
disjointness of $C_i$ and $D_i+t_i$, we have
\begin{equation}\label{simple2}
E_k(C_1,\cdots,C_k)+E_k(D_1+t_1,\cdots,D_k+t_k)\leq E_k(C_1\cup
(D_1+t_1),\cdots,C_k\cup (D_k+t_k)).
\end{equation}
 Since $\vol(C_i\cup
(D_i+r_i))=\vol(C_i)+\vol(D_i)=\vol(B_i)$, Lemma \ref{lem:d=1asym}
implies that
\[  E_k(C_1\cup (D_1+t_1),\cdots,C_k\cup (D_k+t_k))\leq E_k(B_1,\cdots,B_k). \]
The proof is completed by combining this with \eqref{simple1} and
\eqref{simple2}.
\end{proof}

Continuing with the proof of Theorem \ref{thm:bsgv}, we apply the
above lemma repeatedly to get
\[ E_k(B_1,\cdots,B_k)\geq\sum_{j=1}^mE_k(B_{1j},\cdots,B_{kj}), \]
where $B_{ij}\subset\R^{d+1}$ is the closed ball centered at the
origin with $\vol(B_{ij})=|A_{ij}|$. Combining this with the lower
bound for $S_1$ above, we conclude that there exists $1\leq j\leq m$
such that
\[ E_k(A_{1j},\cdots,A_{kj})\geq
E_k(B_{1j},\cdots,B_{kj})+\frac{\epsilon}{3m}|X|^{k-1}. \]

Since $|X_j+X_j|\ll_{\epsilon} |X_j|$, the proof in Section
\ref{sec:smalldoubling} shows that there is a proper progression $Q$
of rank at most $d$ such that $|Q|\ll_{\epsilon}|X_j|$ and $|Q\cap
X_j|\gg_{\epsilon}|X_j|$. We thus conclude that
\[ |Q|\ll_{\epsilon}|X|,\ \ |Q\cap X|\geq |Q\cap
X_j|\gg_{\epsilon}|X_j|\gg_{\epsilon}|X|, \] as desired.

\bibliography{note}{}
\bibliographystyle{plain}

\end{document}